\documentclass[12pt]{amsart}

\usepackage{amssymb,mathrsfs,amsmath,amsthm,color,bm,mathtools,bbm,wasysym,cases,mathdots}
\usepackage[pagebackref]{hyperref}
\usepackage[noadjust]{cite}
\usepackage{enumerate}
\usepackage[centering]{geometry}
\allowdisplaybreaks

\newtheorem{theorem}{Theorem}[section]
\newtheorem*{theorem*}{Theorem}

\newtheorem*{question*}{Question}

\newtheorem*{conjecture*}{Conjecture}
\newtheorem{lemma}[theorem]{Lemma}
\newtheorem{proposition}[theorem]{Proposition}
\newtheorem{corollary}[theorem]{Corollary}
\newtheorem{definition}[theorem]{Definition}
\theoremstyle{remark}
\newtheorem{remark}{Remark}
\numberwithin{equation}{section}

\newcommand{\R}{\mathbb{R}}
\newcommand{\Z}{\mathbb{Z}}
\newcommand{\N}{\mathbb{N}}

\newcommand{\cP}{\mathcal{P}}

\newcommand{\SL}{\mathrm{SL}}
\newcommand{\inj}{\operatorname{inj}}

\DeclareMathOperator{\hdim}{\dim_H}

\newcommand{\SO}{\operatorname{SO}}

\newcommand{\qaq}{\mathrm{\quad and\quad}}

\newcommand{\cha}{\mathbbm{1}}
\newcommand{\lm}{\mathcal L}
\renewcommand{\hm}{\mathcal H}

\newcommand{\hc}{\mathcal H_\infty}

\newcommand{\cf}{\mathcal F}

\newcommand{\bx}{\mathbf{x}}

\newcommand{\bp}{\mathbf{p}}

\newcommand{\bv}{\mathbf{v}}

\title[$\tau$-approximable points on self-similar sets]{Hausdorff dimension of $\tau$-approximable points on self-similar sets in $\R^d$}
\author{Yubin He}

\address{Department of Mathematics, Shantou University, Shantou, Guangdong, 515063, China}

\email{ybhemath@outlook.com}

\author{Lingmin Liao}

\address{School of Mathematics and Statistics, Wuhan University, Wuhan, Hubei 430072, China}

\email{lmliao@whu.edu.cn}

\subjclass[2020]{11J83, 28A80, 11K55, 28A78.}

\keywords{metric Diophantine approximation, self-similar sets, inhomogeneous approximation, Hausdorff measure}

\begin{document}
	\begin{abstract}
		Let $d\geq1$. Let $K\subset\R^d$ be a non-singleton self-similar set
		generated by a finite strongly irreducible iterated function system
		satisfying the open set condition, and let $\delta=\hdim K$. For
		$\tau>1/d$, set
		\[
		W_d(\tau)
		=
		\left\{
		\mathbf{x}\in\R^d:
		|q\mathbf{x}-\mathbf{p}|<q^{-\tau}
		\text{ for infinitely many }(\mathbf{p},q)\in\Z^d\times\N
		\right\}.
		\]
		We prove that there exists $\varepsilon_K>0$ such that, for every
		$1/d<\tau<1/d+\varepsilon_K$,
		\[
		\hm^{s(\tau)}(K\cap W_d(\tau))=\infty,
		\qquad\text{with }
		s(\tau):=\delta+\frac{d+1}{1+\tau}-d,
		\]
		and consequently
		\[
		\hdim(K\cap W_d(\tau))
		=
		\delta+\frac{d+1}{1+\tau}-d.
		\]
		In dimension one, specializing to the middle-third Cantor set,
		this establishes the Bugeaud--Durand conjectural formula
		for $\tau>1$ sufficiently close to $1$.
	\end{abstract}
	\maketitle
\section{Introduction}
Let $d\ge1$, and write $|\cdot|$ for the supremum norm on $\R^d$. For
$\bx\in\R^d$, let
\[
\|\bx\|
:=
\inf_{\bp\in\Z^d}
|\bx-\bp|
\]
denote the distance from $\bx$ to the nearest integer vector. Let
$\psi:\N\to(0,\infty)$ be a decreasing function. We say that
$\bx\in\R^d$ is \emph{$\psi$-approximable} if
\[
\|q\bx\|<\psi(q)
\]
for infinitely many $q\in\N$. We denote the set of $\psi$-approximatle points by $W_d(\psi)$. That is,
\[
\begin{split}
	W_d(\psi)
	:=&
	\left\{
	\bx\in\R^d:
	\|q\bx\|<\psi(q)
	\text{ for infinitely many }q\in\N
	\right\}\\
	=&
	\left\{
	\bx\in\R^d:
	|q\bx-p|<\psi(q)
	\text{ for infinitely many }(\bp,q)\in\Z^d\times \N
	\right\}.
\end{split}
\]
Write $W_d(\tau)$ in place
of $W_d(\psi)$ if $\psi(q)=q^{-\tau}$.

A basic starting point in simultaneous Diophantine approximation is
Dirichlet's theorem, which asserts that for every $\mathbf{x}\in\mathbb{R}^d$, there exist infinitely many
$(\mathbf{p},q)\in\mathbb{Z}^d\times\mathbb{N}$ such that
\[
|q\mathbf{x}-\mathbf{p}|\leq q^{-1/d}.
\]
Consequently,
\[
W_d(1/d)=\mathbb{R}^d,
\]
so $1/d$ is the natural critical exponent.

Dirichlet's theorem provides an approximation result that holds for every point.
The corresponding measure-theoretic question concerning the $d$-dimensional
Lebesgue measure, denoted by $\lm_d$, of the set $W_d(\psi)$ is answered by
Khintchine's theorem \cite{Khintmeasure} (for $d=1$), which states that
\[
\lm_d\bigl([0,1]^d\cap W_d(\psi)\bigr)
=
\begin{cases}
	0,
	&
	\displaystyle\sum_{q=1}^{\infty}\psi(q)^d<\infty,
	\\[2ex]
	1,
	&
	\displaystyle\sum_{q=1}^{\infty}\psi(q)^d=\infty.
\end{cases}
\]
Thus, the convergence or divergence of this simple series completely determines
whether almost every point is $\psi$-approximable. The higher dimensional version of this result was established by Schmidt
\cite{SchmidtmeasureTAMS}. Nevertheless, following the terminology in
the literature, we shall continue to refer to the above zero--one law as
Khintchine's theorem, including in the higher dimensional setting.

For $\tau>1/d$, the
Jarn\'ik--Besicovitch theorem gives
\[
\hdim W_d(\tau)
=
\frac{d+1}{1+\tau}.
\]
In fact, the one-dimensional case of the above result was obtained independently by Jarn\'ik
\cite{Jarnikdimension} and Besicovitch
\cite{BesicovitchJLMSDiophantine}, while the higher dimensional version is due to Jarn\'ik \cite{Jarnikmeasure}.

A natural question is what remains true when the ambient space $\R^d$ is
replaced by a lower-dimensional or fractal subset. In the case of fractals,
this problem goes back to Mahler's question on rational approximation of
points in the middle-third Cantor set. More generally, if
$K\subset\R^d$ is a self-similar set, one would like to understand the size
of
\[
K\cap W_d(\tau),
\]
both with respect to the natural self-similar measure on $K$ and in terms of
Hausdorff dimension and Hausdorff measure.

For the measure-theoretic problem, one asks whether analogues of
Khintchine's theorem continue to hold for self-similar measures.
Substantial progress has been made in recent years. Kleinbock, Lindenstrauss
and Weiss \cite{KleinLindWeissSelect} proved, in particular, that friendly
measures are extremal: if $\mu$ is such a measure, then for $\mu$-almost
every $\bx$ there is no $\tau>1/d$ for which
$\bx\in W_d(\tau)$. Thus, in the sense of measure $\mu$, the critical
exponent $1/d$ cannot be improved at almost every point. Remark that the class of
friendly measures includes many natural self-similar measures. Subsequent
work of Einsiedler, Fishman and Shapira \cite{EinFishShafractalsGAFA},
Simmons and Weiss \cite{SimmonsWeissfractalInvent},
Khalil and Luethi \cite{KhalilLuethifractalInvent}, Yu \cite{Yufractals},
and Datta and Jana \cite{DattaJanafractals} established Khintchine-type
results for certain classes of fractal measures, under various additional
hypotheses. In the strongly irreducible self-similar setting, the
measure-theoretic theory was completed by B\'enard, He and Zhang
\cite{BenHeZhangfractalJAMS,BenHeZhangfractalRd}, who proved a full
Khintchine-type zero--one law.

The corresponding Hausdorff dimension problem is also quite subtle. Already in dimension one, it has a rather rich history.
Levesley, Salp and Velani
\cite{LevSalpVelaniMahlerMathAnn} studied Diophantine approximation on the
middle-third Cantor set $K_{1/3}$ by rational points whose denominators are powers of
$3$. In particular, their results imply that
\[
\hdim(K_{1/3}\cap W_1(\tau))
\geq
\frac{\log 2}{\log 3}\cdot\frac{1}{1+\tau}.
\]
This lower bound reflects the arithmetic structure inherent in the middle-third Cantor set: the relevant rational points are essentially the triadic
endpoints arising at different stages of the construction. Levesley, Salp
and Velani also suggested that the dimension of the intersection might simply
be the product of the dimensions of $K_{1/3}$ and $W_1(\tau)$.

Bugeaud and Durand \cite{BugeaudDurandJEMS} subsequently revisited the
problem and proposed a different conjecture. Namely, for every $\tau>1$,
\[
\hdim(K_{1/3}\cap W_1(\tau))
=
\max\left\{
\frac{\log 2}{\log 3}+\frac{2}{1+\tau}-1,\
\frac{\log 2}{\log 3}\cdot\frac{1}{1+\tau}
\right\}.
\]
Near the critical exponent $1$, the first term is dominant.
This naturally leads to a more general question: for a fractal set
$K\subset\mathbb R^d$, under what conditions does the expected dimension
formula
\begin{equation}\label{eq:conjecture}
	\hdim(K\cap W_d(\tau))
	=
	\hdim K+\hdim W_d(\tau)-d
	=
	\hdim K+\frac{d+1}{1+\tau}-d
\end{equation}
hold?

Several partial results in this direction have since been obtained. In the case of $d=1$, Yu \cite{Yufractals} proved the dimension formula \eqref{eq:conjecture} for sufficiently thick missing-digit sets when $\tau>1$ is sufficiently close to $1$, although his result does not currently cover the middle-third Cantor set. Chen \cite{Chenfractal} established the sharp upper bound predicted by \eqref{eq:conjecture} for general self-similar sets, while obtaining a non-trivial but weaker lower bound.

More recently, in our previous work \cite{HeLiaofractal}, we went beyond
the one-dimensional setting and exhibited certain non-trivial self-similar sets in
higher dimensions for which the dimension formula \eqref{eq:conjecture}
holds. Motivated by the works of Yu \cite{Yufractals} and Chen
\cite{Chenfractal}, we also developed a general framework for
Diophantine approximation on fractal sets in $\R^d$, based on a local
rational counting property (see Definition~\ref{d:local counting property}).
Assuming this property, we obtained the sharp upper bound
\begin{equation}\label{eq:upper}
	\hdim(K\cap W_d(\tau))
	\le
	\delta+\frac{d+1}{1+\tau}-d,
\end{equation}
as well as non-trivial lower bounds for $\tau$ in a suitable range.
Moreover, in \cite{HeLiaofractal} we verified the local counting property for strongly irreducible
self-similar sets satisfying the open set condition, thereby making the
sharp upper bound available for this class of self-similar sets.
In the one-dimensional homogeneous setting, one of the resulting lower
bounds is also sharp for a class of sufficiently thick self-similar sets.
Nevertheless, the local counting condition needed to obtain such lower
bounds is too restrictive to treat some of the most basic examples, most
notably the middle-third Cantor set. This limitation is one of the main
motivations for the present work.

In this paper, we settle this problem beyond dimension one by establishing
the exact Hausdorff dimension formula for $K\cap W_d(\tau)$ when $K$ is a
strongly irreducible self-similar set and $\tau$ is sufficiently close to
$1/d$.

\begin{theorem}\label{t:main}
	Let $d\ge1$, and let $K\subset\R^d$ be a non-singleton self-similar set
	generated by a finite strongly irreducible IFS satisfying the open set
	condition. Let
	\[
	\delta:=\hdim K.
	\]
	Then, there exists $\varepsilon_K>0$ such that, for every
	\[
	\frac1d<\tau<\frac1d+\varepsilon_K,
	\]
	we have
	\[\hm^{s(\tau)}(K\cap W_d(\tau))
	=
	\infty,\quad \text{with}\quad s(\tau)
	:=
	\delta+\frac{d+1}{1+\tau}-d,\]
	and in particular,
	\[\hdim(K\cap W_d(\tau))
	=
	\delta+\frac{d+1}{1+\tau}-d.\]
\end{theorem}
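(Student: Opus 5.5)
The upper bound $\hdim(K\cap W_d(\tau))\le s(\tau)$ is already available: by \cite{HeLiaofractal}, strongly irreducible self-similar sets with the open set condition have the local rational counting property, and this gives \eqref{eq:upper}. So the work is the lower bound, and I would in fact prove the stronger statement $\hm^{s(\tau)}(K\cap W_d(\tau))=\infty$. The plan is a mass transference principle from balls to balls, in the fractal form of Beresnevich--Velani, applied with the natural self-similar measure $\mu$ on $K$, which is $\delta$-Ahlfors regular.

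\textbf{Step 1 (full measure at a slightly improved Dirichlet exponent).} Work with the rational points $\bp/q$ of denominator $q\in[Q,2Q)$ and with balls $B(\bp/q,\,q^{-1-\sigma})$, where $\sigma>1/d$ is close to $1/d$. The aim is to show that
\[
\mu\Bigl(\limsup B(\bp/q,\,q^{-1-\sigma})\Bigr)=\mu(K)=1,
\]
and moreover that this holds locally: for every ball $B$ centred on $K$, the balls lying inside $B$ cover a positive proportion of $\mu(B)$ at every large scale. This is a Khintchine-type divergence statement for exponent $\sigma$ slightly above $1/d$. It should follow from the effective equidistribution of Bénard--He--Zhang \cite{BenHeZhangfractalJAMS,BenHeZhangfractalRd}: the counting of rationals near $K$, weighted by $\mu$, matches the Lebesgue prediction with a power saving. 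The resulting quasi-independence (a second-moment bound) gives local ubiquity on shells. The power saving $\kappa>0$ is what limits $\sigma-1/d$, and this is where $\varepsilon_K$ originates.

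\textbf{Step 2 (mass transference).} Choose the ubiquity function $\rho(q)=q^{-1-\sigma}$, where $\sigma\to 1/d^+$ is allowed inside the permitted window. Apply the mass transference principle for Ahlfors-regular measures: from local ubiquity at radius $q^{-1-\sigma}$, shrink to radius $q^{-1-\tau}$. The standard Cantor-type construction then gives
\[
\hm^{s}(K\cap W_d(\tau))=\hm^{s}(K) \quad \text{(infinite) for } s \text{ with } \ \bigl(q^{-1-\tau}\bigr)^{s}\approx \bigl(q^{-1-\sigma}\bigr)^{\delta}\times(\text{count correction}).
\]
Rather than a naive application, I would count the rationals directly. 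In a ball of radius $r$ about a point of $K$ there are about $Q^{d+1}r^{d}$ rationals with denominator near $Q$, but only about $Q^{d+1}r^{\delta}\cdot Q^{-(1+\sigma)(d-\delta)}$ of them are near $K$. Combining this with the mass distribution principle on the constructed Cantor subset yields the exponent $\delta+(d+1)/(1+\tau)-d$. The computation shows the answer is independent of $\sigma$ once $\sigma$ is admissible, which forces the condition $\tau<1/d+\varepsilon_K$ with $\varepsilon_K$ depending on $\kappa$ and $\delta$.

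\textbf{Main obstacle.} The hard step is Step 1: obtaining local ubiquity with a polynomial error term uniformly over small balls, including the separation between distinct rationals near $K$ needed for the Cantor construction. I would first try to derive it from the effective equidistribution of expanding translates of $\mu$ under the diagonal flow on $\SL_{d+1}(\R)/\SL_{d+1}(\Z)$. This is available by strong irreducibility via Bénard--He--Zhang. Its Dani-correspondence translation, together with the local counting property from \cite{HeLiaofractal}, should control the second moment.
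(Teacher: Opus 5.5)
\textbf{Genuine gap.} Your upper-bound citation is fine. The lower bound fails, for two reasons below.

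\textbf{Step 1 is false.} The set $\limsup B(\bp/q,q^{-1-\sigma})$ is exactly $W_d(\sigma)$. For $\sigma>1/d$ the series $\sum_q q^{-d\sigma}$ converges, so you are on the \emph{convergence} side, not the divergence side. Since $\mu$ is friendly it is extremal \cite{KleinLindWeissSelect}; equivalently, use the convergence half of the Khintchine dichotomy of \cite{BenHeZhangfractalRd}. Either way $\mu(W_d(\sigma))=0$ for every $\sigma>1/d$. Locally, the proportion of $\mu(B)$ covered at scale $Q$ decays like $Q^{1-d\sigma}$, so no positive-proportion ubiquity holds at any $\sigma>1/d$.

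\textbf{The centring problem.} The Beresnevich--Velani principle for an Ahlfors-regular measure needs balls centred on $K$, and $\bp/q$ generally is not on $K$. Suppose you use the Dirichlet-level balls $B(\bp/q,q^{-1-1/d})$, which do have full-measure limsup. Your recipe $(q^{-1-\tau})^s=(q^{-1-1/d})^\delta$ then returns $s=\delta(1+1/d)/(1+\tau)$. This exceeds $s(\tau)$ by $(d-\delta)(\tau-1/d)/(1+\tau)$, contradicting the known upper bound. So the deferred ``count correction'' is the entire problem, not a detail. If you recentre at points of $K$ that are $q^{-1-\tau}$-close to $\bp/q$, the compatible enlarged radius is $q^{-(1+\tau)s(\tau)/\delta}\ge q^{-1-1/d}$. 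This corresponds to an exponent slightly \emph{below} $1/d$, not above. Full measure for these recentred balls is then a genuine ubiquity statement about rationals very close to $K$, and Dirichlet does not supply it.

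\textbf{The missing small-scale counting.} Proving that ubiquity, or the Frostman bound the paper uses instead, requires the sharp upper estimate $\mu(D\cap A_Q(\eta))\ll\mu(D)\,Q\eta^d$. This must hold on balls $D$ of radius $Q^{-\beta}$ with $\beta$ close to $1+1/d$. The paper needs $\beta>(d-\alpha_0)(d+1)/d^2$, so that the simplex lemma and absolute decay can handle the scales from $Q^{-(1+\tau)}$ to $Q^{-\beta\delta/s}$. Your plan has no mechanism for this. The uniform local counting property of \cite{HeLiaofractal} holds only for a restricted range of $\beta$. The reason is that the equidistribution error carries $\inj(h_\omega\Delta_0)^{-1}$, which uniformly is bounded only by a power of $\rho_\omega^{-1}$. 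A second-moment argument built on it therefore breaks down exactly at the relevant scales.

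\textbf{The idea you need: pruning bad branches.} Keep the branch-dependent factor rather than bounding it uniformly.
\begin{enumerate}
\item Call $\omega\in\mathcal S_Q(\beta)$ good if $\inj(h_\omega\Delta_0)\ge Q^{-\xi}$. Good branches give two-sided counting $\mu_\omega(A_Q(c_0Q^{-\tau}))\asymp Q^{1-d\tau}$.
\item Apply the effective recurrence of \cite{BenHeZhangfractalRd} to $\mathfrak m^{*n}*\delta_{h_u\Delta_0}$. This shows bad branches inside $K_u\subset B$ carry mass $\ll_u Q^{-\gamma}\mu(K_u)$.
\item \emph{Prune} the bad branches from the support of the Frostman measure $\mu|_{F_Q(J,\tau)}$.
\end{enumerate}
Pair this with the mass transference principle from balls to open sets \cite{HeMTPadv}. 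That principle needs only $\hc^{s}(B\cap A_Q(c_0Q^{-\tau}))\gg\mu(B)$. It therefore avoids both the centring issue and any counting of individual rationals. This pruning step is what closes the paper's argument, and nothing in your proposal plays its role.
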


Remark that when $d=1$, the
formula becomes
\[
s(\tau)
=
\delta+\frac{2}{1+\tau}-1,
\]
which is precisely the Bugeaud--Durand conjectural
formula when $\tau>1$ is sufficiently close to 1.
	\subsection{New ingredients}

	We begin by recalling the local counting property introduced in our
	previous work \cite{HeLiaofractal}. Since the present paper is concerned
	only with homogeneous Diophantine approximation, we formulate it directly
	in the homogeneous setting other than the general inhomogenous setting of \cite{HeLiaofractal}.

	For \(Q>1\) and \(\eta>0\), define
	\[
	A_Q(\eta)
	:=
	\left\{
	\bx\in\R^d:
	\|q\bx\|<\eta
	\text{ for some }Q\le q<2Q
	\right\}.
	\]

	\begin{definition}\label{d:local counting property}
		Let \(K\subset\R^d\) be a compact set equipped with a non-atomic
		probability measure \(\mu\), and let
		\[
		\alpha>\frac1d,
		\qquad
		\beta>0.
		\]
		We say that \(\mu\) satisfies the
		\emph{\((\alpha,\beta)\)-local counting property}
		if, for all sufficiently large \(Q\), all
		\[
		Q^{-\alpha}\le\eta\le Q^{-1/d},
		\]
		and every ball \(B\) in \(K\) satisfying
		\[
		|B|\ge Q^{-\beta},
		\]
		one has
		\begin{equation}\label{eq:local counting}
			c^{-1}\mu(B)\,Q\eta^d\le 	\mu\bigl(B\cap A_Q(\eta)\bigr)
		\le c
			\mu(B)\,Q\eta^d,
		\end{equation}
		for some constant $c\ge 1$  independent of \(Q\), \(\eta\) and
		\(B\).
	\end{definition}

	The parameter $\beta$ determines the local scale
	at which rational points are effectively counted. Notice that increasing \(\beta\) makes
	Definition~\ref{d:local counting property} substantially stronger, since
	\eqref{eq:local counting} is then required to hold for a larger collection
	of small balls. On the other hand, a sufficiently large value
	of \(\beta\) is essential for obtaining sharp Hausdorff dimension lower
	bounds by the method of \cite{HeLiaofractal}. For instance, as observed
	there, in order for the arguments in \cite{HeLiaofractal} to yield the conjectured dimension for
	the middle-third Cantor set one would need
	\[
	\beta
	>
	2\left(
	1-\frac{\log 2}{\log 3}
	\right)
	\approx 0.7381.
	\]
	The difficulty is that requiring \eqref{eq:local counting} uniformly for
	\emph{every} ball \(B\) is quite strong, and this uniformity
	severely restricts the range of admissible \(\beta\).

	The main new observation of the present paper is that such a uniform local
	counting statement is not actually needed in the proof. At a given scale \(Q^{-\beta}\), we do not need the local counting estimate
	to hold on every ball. It is enough that it holds on ``most" of
	them, as long as the balls where it fails carry only a very small
	proportion of the total mass. Effective recurrence
	(see Theorem \ref{t:BHZ recurrence}) provides precisely the control needed
	to implement this idea and, more importantly, allows us to take \(\beta\)
	arbitrarily close to \(1+1/d\). This is the crucial point.

	\subsection{Notation}

	For a set $E\subset\R^d$, we write $|E|$ for its diameter. We use the notation
	\[
	a\ll b
	\]
	to mean that $a\leq cb$ for some constant $c>0$, and write $a\asymp b$ if
	both $a\ll b$ and $b\ll a$. A subscript indicates the allowed dependence of
	the implied constant; for example,
	\[
	a\ll_u b
	\]
	means that the implied constant may depend on $u$. We also use $\mathrm{O}(\cdot)$ to denote an error term, with the same
	subscript convention, so that $\mathrm{O}_\theta(\cdot)$ allows the implied
	constant to depend on $\theta$. Although the notation $\mathrm{O}(\cdot)$ is visually similar to the symbol $O$ used for an orthogonal matrix, no ambiguity will arise in the proof.

	\subsection{Organization of the paper}

	In Section~\ref{s:preliminaries}, we recall some basic facts about self-similar sets and measures,
	together with the mass transference principle used later in the proof.
	In Section~\ref{s:branch equidistribution}, we use effective equidistribution to obtain rational counting
	estimates.
	In Section~\ref{s:effective recurrence}, we combine these estimates with effective recurrence to obtain
	sharp counting after removing a small exceptional family. Finally, in
	Section~\ref{s:proof}, we apply the results in Section \ref{s:effective recurrence} to prove Theorem~\ref{t:main}.

\section{Preliminaries}	\label{s:preliminaries}

	In this section, we first recall some standard facts about self-similar sets
	and their natural measures. We then introduce a mass transference principle
	that will be used to establish the lower bound for the Hausdorff measure of
	$K\cap W_d(\tau)$.

	\subsection{Self-similar sets}

	Fix a finite set $\Lambda$. An \emph{iterated function system} (IFS for
	short) on $\R^d$ is a finite collection
	\[
	\cf=\{\phi_i:i\in\Lambda\}
	\]
	of contractive similarities of the form
	\[
	\phi_i(\mathbf{x})=\rho_iO_i\mathbf{x}+\mathbf{b}_i,
	\]
	where $0<\rho_i<1$, $O_i\in\SO(d)$ and $\mathbf{b}_i\in\R^d$.

	It was shown by Hutchinson \cite{Hutchinsoninitial} that there exists a unique
	non-empty compact set $K=K(\cf)\subset\R^d$, called the
	\emph{self-similar set} associated with $\cf$, such that
	\begin{equation}\label{eq:self similar set definition}
		K=\bigcup_{i\in\Lambda}\phi_i(K).
	\end{equation}
	Following \cite{Hutchinsoninitial}, we say that $\cf$ satisfies the
	\emph{open set condition} (OSC for short) if there exists a non-empty bounded
	open set $U\subset\R^d$ such that
	\[
	\bigcup_{i\in\Lambda}\phi_i(U)\subset U
	\]
	and
	\[
	\phi_i(U)\cap\phi_j(U)=\emptyset
	\qquad\text{for all }i\neq j.
	\]

	We say that $\cf$ is \emph{strongly irreducible} if there is no finite union
	of proper affine subspaces of $\R^d$ which is invariant under every
	$\phi_i\in\cf$. Here and throughout the sequel, we assume that $\cf$ is
	strongly irreducible and satisfies the OSC.

	The Hausdorff dimension of $K$ is the unique number $\delta\geq0$ satisfying
	\[
	\sum_{i\in\Lambda}\rho_i^\delta=1.
	\]
	Moreover,
	\[
	0<\hm^\delta(K)<\infty.
	\]
	The natural measure on $K$ is the normalized
	$\delta$-dimensional Hausdorff measure
	\[
	\mu
	:=
	\frac{\hm^\delta|_K}{\hm^\delta(K)}.
	\]
	It is a self-similar probability measure satisfying
	\begin{equation}\label{eq:self-similar measure}
		\mu
		=
		\sum_{i\in\Lambda}
		\rho_i^\delta\,(\phi_i)_*\mu
		=
		\sum_{i\in\Lambda}
		\rho_i^\delta\,(\mu\circ\phi_i^{-1}).
	\end{equation}
	Moreover, $\mu$ is $\delta$-Ahlfors regular: for every $\bx\in K$ and every
	$0<r\le|K|$,
	\begin{equation}\label{eq:Ahlfors regular}
		\mu(B(\bx,r))\asymp r^\delta.
	\end{equation}

	We shall also use the standard absolute decay property of irreducible
	self-similar measures satisfying the OSC; see
	\cite[Theorem 2.3]{KleinLindWeissSelect}. That is, there exist
	$C>0$ and $\alpha_0>0$ such that, for every affine
	hyperplane $L\subset\R^d$, every $\bx\in K$, and every
	$0<\varepsilon\leq r\le|K|$,
	\begin{equation}\label{eq:absolute decay}
		\mu\big(B(\bx,r)\cap L^{(\varepsilon)}\big)
		\leq
		C
		\left(\frac{\varepsilon}{r}\right)^{\alpha_0}
		\mu(B(\bx,r)),
	\end{equation}
	where $L^{(\varepsilon)}$ denotes the $\varepsilon$-neighborhood of $L$.
	By decreasing $\alpha_0$ if necessary, we assume that
	$0<\alpha_0\leq d$.

	Let
	\[
	\Lambda^*:=\bigcup_{k\ge1}\Lambda^k
	\]
	denote the set of finite words over $\Lambda$. For later use, for
	$\omega=(i_1,\ldots,i_k)\in\Lambda^k$ with $k\ge2$, we write
	\begin{equation}\label{eq:omega-}
		\omega^-:=(i_1,\ldots,i_{k-1})
	\end{equation}
	its parent word. Given $\omega=(i_1,\ldots,i_k)\in\Lambda^k$, define
	\begin{equation}\label{eq:branch}
		\phi_\omega
		:=
		\phi_{i_1}\circ\cdots\circ\phi_{i_k},
		\qquad
		\rho_\omega
		:=
		\rho_{i_1}\cdots\rho_{i_k},
		\qquad
		K_\omega
		:=
		\phi_\omega(K).
	\end{equation}
	Then, there exist $O_\omega\in\SO(d)$ and $\mathbf{b}_\omega\in\R^d$
	such that
	\begin{equation}\label{eq:phi_omega}
			\phi_\omega(\mathbf{x})
		=
		\rho_\omega O_\omega\mathbf{x}+\mathbf{b}_\omega.
	\end{equation}
	Moreover,
	\[
	\mu(K_\omega)=\rho_\omega^\delta.
	\]

	For each $\omega\in\Lambda^*$, define the corresponding \emph{branch
	measure} by
	\begin{equation}\label{eq:branch measure}
		\mu_\omega
		:=
		(\phi_\omega)_*\mu.
	\end{equation}
	Then, $\mu_\omega$ is a probability measure supported on $K_\omega$, and,
	for every Borel set $A\subset\R^d$,
	\begin{equation}\label{eq:branch conditional identity}
		\mu(K_\omega\cap A)
		=
		\rho_\omega^\delta\,\mu_\omega(A).
	\end{equation}

	Here and throughout the sequel, $\mu$ denotes the natural self-similar
	measure supported on $K$.

\subsection{Hausdorff measure and content}

	For $s>0$, a set $E\subset\R^d$, and $\eta>0$, define
	\[
	\mathcal H_\eta^s(E)
	:=
	\inf\left\{
	\sum_i |B_i|^s:
	E\subset\bigcup_{i\ge1}B_i,\
	B_i\subset\R^d \text{ are balls with } |B_i|\le\eta
	\right\}.
	\]
	The \emph{$s$-dimensional Hausdorff measure} of $E$ is defined by
	\[
	\hm^s(E)
	:=
	\lim_{\eta\to0^+}\mathcal H_\eta^s(E).
	\]
	When $\eta=\infty$, $\hc^s(E)$ is referred to as the
	\emph{$s$-dimensional Hausdorff content} of $E$.

	The following result, known as the mass transference principle from balls to
	open sets, is a fundamental tool for studying the Hausdorff measure of limsup
	sets. It provides a convenient criterion for deriving lower bounds for the
	Hausdorff measure of a limsup set from local Hausdorff content estimates.

	\begin{theorem}[Mass transference principle {\cite[Corollary 2.6]{HeMTPadv}}]
		\label{t:weaken}
		Let $0<s\le\delta$. Suppose that $\{B_i\}$ is a sequence of balls
		centered on $K$, with radii tending to zero, such that
		\[
		\mu\Big(\limsup_{i\to\infty}B_i\Big)=1.
		\]
		Let $\{E_n\}$ be a sequence of open subsets of $\R^d$. Assume that there
		exists a constant $c>0$ such that, for every $i$,
		\begin{equation}\label{eq:condition}
			\limsup_{n\to\infty}
			\hc^s(E_n\cap B_i)
			\ge
			c\,\mu(B_i).
		\end{equation}
		Then,
		\[
		\hm^s
		\Big(
		K\cap\limsup_{n\to\infty}E_n
		\Big)
		=
		\hm^s(K).
		\]
	\end{theorem}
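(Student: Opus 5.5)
The plan is the standard Cantor-set construction behind mass transference principles (Beresnevich--Velani, Koivusalo--Rams, Wang--Wu), carried out relative to $\mu$ on $K$. First I would reduce to a local statement. Since $\mu$ is $\delta$-Ahlfors regular, $\hm^s(K)=\infty$ when $s<\delta$, and $\hm^\delta(K)\asymp 1$ when $s=\delta$. So it suffices to prove the following: for every ball $B$ centred on $K$ and every $\eta>0$,
\[
\hm^s_\eta\Big(K\cap B\cap \limsup_{n\to\infty} E_n\Big)\gg \mu(B)\,\kappa(\eta),
\]
where $\kappa(\eta)\to\infty$ as $\eta\to0$ if $s<\delta$, and $\kappa\equiv1$ if $s=\delta$. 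For $s=\delta$ one then upgrades positive density on every ball to full $\hm^\delta|_K$-measure by the Lebesgue density theorem for the doubling measure $\mu$. Both bounds will come from the mass distribution principle applied to a probability measure $\nu$ on a Cantor set $F\subset K\cap B\cap\limsup E_n$ with $\nu(D)\ll |D|^s/\mu(B)$ for every ball $D$ of small diameter. The $s<\delta$ blow-up comes from choosing the first-level balls with $|B_i|^s\gg\mu(B_i)$.

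The construction is inductive. At level one, I use $\mu(\limsup B_i)=1$ and the $5r$-covering lemma to pick finitely many disjoint balls $B_i\subset B$ with arbitrarily small radii and $\sum\mu(B_i)\ge\frac12\mu(B)$. Inside each such $B_i$, hypothesis \eqref{eq:condition} gives $n$ as large as desired with $\hc^s(E_n\cap B_i)\ge \frac c2\mu(B_i)$. Since $E_n\cap B_i$ is open, a Frostman-type lemma for Hausdorff content yields finitely many disjoint balls $D_j\subset E_n\cap B_i$ and weights $\propto |D_j|^s$. These weights satisfy $\sum_{D_j\subset D}|D_j|^s\ll|D|^s$ for every ball $D$, and their total is $\asymp\hc^s(E_n\cap B_i)\gg\mu(B_i)$. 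I assign to $B_i$ the mass $\nu(B)\,\mu(B_i)/\sum_{i'}\mu(B_{i'})$ and split it among the $D_j$ in proportion to $|D_j|^s$.

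At the next level, inside each $D_j$ I again use $\mu(\limsup B_i)=1$ to select disjoint $K$-centred balls $B_{i'}\subset D_j$ of much smaller radius carrying at least half of $\mu(D_j)$. I repeat the previous step inside them, distributing the mass of $D_j$ in proportion to $\mu$. This keeps the construction on $K$, and in the limit $F\subset K\cap\limsup E_n$, because each level uses a larger index $n$.

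The main obstacle is the mass estimate $\nu(D)\ll|D|^s/\mu(B)$ for an arbitrary ball $D$. It must be checked separately according to which levels $D$ straddles: scales between two consecutive radii, scales comparable to some $D_j$, and scales between $D_j$ and the subsequent $B_{i'}$. The first two regimes follow from the Frostman bound on the $D_j$ and from $\mu(D\cap\cdot)\ll|D|^\delta\le|D|^s$, using $s\le\delta$. The third regime is the delicate one. Inside $D_j$ the mass is spread according to $\mu$, which requires $\mu(D_j)$ to be comparable to $|D_j|^\delta$. I would secure this by choosing the Frostman balls $D_j$ centred on $K$, or at least meeting $K$ in a ball of comparable size. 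That choice is justified because the part of $E_n\cap B_i$ far from $K$ contributes nothing to the limsup set, and Ahlfors regularity lets one replace such $D_j$ by $K$-centred balls of comparable radius while losing only a constant in content. I would then combine the resulting estimate $\mu(D_j)\asymp|D_j|^\delta\le|D_j|^s$ with the decay of radii between levels (chosen super-exponentially) to absorb the constants. Finally, the mass distribution principle finishes both cases.
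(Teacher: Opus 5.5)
The paper itself gives no proof of this statement; it quotes \cite[Corollary 2.6]{HeMTPadv}. Your outline has two genuine gaps.

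\textbf{First gap: the relocation step.} You cannot replace a Frostman ball $D_j\subset E_n\cap B_i$ that lies away from $K$ by a $K$-centred ball of comparable radius, because the new ball need not lie in $E_n$. Your Cantor set would then leave $\limsup E_n$. This cannot be repaired, because with $\hc^s(E_n\cap B_i)$ computed in $\R^d$ the statement is false. Take $d=1$, $K$ the middle-third Cantor set, and $E_n=\R\setminus K$ for every $n$. Every $K$-centred ball $B$ of radius $r\le1$ contains a gap of $K$ of length $>r/9$. Hence $\hc^s(E_n\cap B)\ge (r/9)^s\gg r^\delta\asymp\mu(B)$ for every $s\le\delta$ and any admissible $\{B_i\}$, yet $K\cap\limsup E_n=\emptyset$. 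The hypothesis has to be read as $\hc^s(K\cap E_n\cap B_i)\ge c\,\mu(B_i)$, i.e.\ one works inside $K$. This is what Proposition~\ref{p:local Hausdorff content} actually proves, since $F_Q(J,\tau)\subset K$. Under that hypothesis you apply the discrete Frostman lemma to $K\cap E_n\cap B_i$ itself (e.g.\ through cylinders $K_\omega\subset E_n$), and no relocation is needed.

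\textbf{Second gap: for $s<\delta$, one family of $B_{i'}$ per generation loses a non-constant factor.} The hypothesis only guarantees that the Frostman balls $D'$ inside a new $B_{i'}$ have total $s$-content $\gg\mu(B_{i'})$. This is sharp, e.g.\ when $E_n\cap B_{i'}$ is a single $K$-centred ball of radius $\asymp\mu(B_{i'})^{1/s}$, as in the classical setting. So in the worst case, starting from $\nu(D_j)\asymp|D_j|^s/\mu(B)$,
\[
\nu(D')\asymp\nu(B_{i'})\frac{|D'|^s}{\mu(B_{i'})}\asymp\nu(D_j)\frac{|D'|^s}{\mu(D_j)}\asymp\frac{|D'|^s}{\mu(B)}\,|D_j|^{-(\delta-s)}.
\]
Your intermediate-scale estimate inside $D_j$ is fine; it is the next generation of Frostman balls that breaks. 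The factor $|D_j|^{-(\delta-s)}$ compounds over generations, and faster decay of radii only makes it larger. The same computation shows that taking tiny first-level $B_i$ does not make $\kappa(\eta)\to\infty$: the $D_j\subset B_i$ still receive mass $\asymp|D_j|^s/\mu(B)$.

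The missing idea is Beresnevich--Velani's \emph{local sub-levels}. Inside each $D_j$, stack a large multiple of $|D_j|^{-(\delta-s)}$ families of $\{B_i\}$-balls at widely separated scales, each carrying a fixed proportion of $\mu(D_j)$. Then the children of $D_j$ have total $s$-content at least a large multiple of $|D_j|^s$. The free number of sub-levels also absorbs the per-generation constants ($c^{-1}$ and the factor $2$), which would otherwise compound too; decay of radii cannot do this. You must then redo the intermediate-scale analysis, since a test ball can meet children from many sub-levels.

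Finally, you do not need $\kappa(\eta)\to\infty$ at all. A bound $\hc^s(K\cap B\cap\limsup E_n)\gg\mu(B)$, uniform over small $K$-centred balls, already gives $\hm^s=\infty$ when $s<\delta$. Suppose $\hm^s(K\cap V\cap\limsup E_n)<\infty$ for some open $V$ meeting $K$. This finite measure dominates a multiple of $\mu$ on small $K$-centred balls in $V$. A Vitali argument then puts $\mu$-almost all of $K\cap V$ inside $\limsup E_n$, which forces $\hm^\delta(K\cap V\cap\limsup E_n)>0$, a contradiction.
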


	Accordingly, the main task in the proof of Theorem~\ref{t:main} will be to
	show that certain rational approximation sets satisfy a lower bound of the
	form
	\[
	\hc^s(E_n\cap B)
	\gg
	\mu(B)
	\]
	at the critical exponent $s=s(\tau)$. To establish such a bound, we construct a
	probability measure supported on $E_n\cap B$ and prove a suitable
	Frostman-type estimate. The required lower bound for the Hausdorff content
	then follows from the standard mass distribution principle.

	\begin{proposition}[Mass distribution principle
		{\cite[Lemma 1.2.8]{BishopPeresbook}}]
		\label{p:MDP}
		Let $E\subset\R^d$ be a Borel set supporting a Borel probability measure
		$\nu$. If there exists $C>0$ such that
		\[
		\nu(B(x,r))
		\le
		Cr^s
		\]
		for every $x\in\R$ and every $r>0$, then
		\[
		\hc^s(E)\ge C^{-1}.
		\]
	\end{proposition}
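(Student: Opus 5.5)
The plan is to compare an arbitrary admissible cover of $E$ with the total mass of $\nu$, using only countable subadditivity of $\nu$ and the assumed Frostman-type bound. Since $\hc^s(E)$ is an infimum over covers, it suffices to show that every countable cover of $E$ by balls $\{B_i\}$ satisfies
\[
\sum_i |B_i|^s\ge C^{-1}.
\]
Taking the infimum over all such covers then gives $\hc^s(E)\ge C^{-1}$. No restriction on the diameters is needed, since here $\eta=\infty$.

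To prove the displayed inequality, fix a cover $E\subset\bigcup_i B_i$ by balls. Write $B_i=B(\bx_i,r_i)$, so that $r_i=|B_i|/2\le|B_i|$. I read the hypothesis as holding for all centres $\bx\in\R^d$; the "$x\in\R$" in the statement is evidently a typo for $\R^d$. Applying it to each ball gives
\[
\nu(B_i)\le C r_i^s\le C|B_i|^s .
\]
Since $\nu$ is a probability measure supported on $E$, and $E$ is Borel, countable subadditivity yields
\[
1=\nu(E)\le\sum_i\nu(B_i)\le C\sum_i|B_i|^s,
\]
which is the desired bound.

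There is no genuine obstacle here; the only points to watch are bookkeeping. First, the hypothesis is phrased with radii while the content uses diameters. This is harmless: we only need $r_i\le|B_i|$, and the resulting factor $2^{-s}\le1$ works in our favour. Second, one should allow open or closed balls, or balls of zero radius. For these, either the same estimate applies directly, or one enlarges the ball slightly and lets the enlargement tend to $0$, which changes nothing. If one prefers covers by arbitrary sets rather than balls, each set $U$ meeting $E$ lies in a ball of radius $|U|$, and the same argument gives $\hc^s(E)\ge(2^sC)^{-1}$. With balls, as in the definition above, the constant $C^{-1}$ is obtained exactly.
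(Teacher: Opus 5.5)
Your argument is correct and is the standard proof of the cited Bishop--Peres lemma, which the paper quotes without proof: bound each covering ball by $\nu(B_i)\le C|B_i|^s$ and compare with $\nu(E)=1$ via countable subadditivity. One small point in your aside on covers by arbitrary sets: the factor $2^{s}$ is unnecessary, since $U\subset\overline{B}(x,|U|)$ for any $x\in U$ already gives $\nu(U)\le C|U|^s$, so you get the constant $C^{-1}$ there too, as Bishop--Peres do.
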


	\section{Effective equidistribution and rational counting on branches}
\label{s:branch equidistribution}

	We first recall the effective equidistribution result for self-similar
	measures in the form needed here, and then apply it to obtain a rational
	counting estimate on each self-similar branch.

	Let
	\[
	G=\SL_{d+1}(\R),
	\qquad
	\Gamma=\SL_{d+1}(\Z),
	\qquad
	X=G/\Gamma,
	\]
	and write
	\[
	\Delta_0:=\Gamma\in X
	\]
	for the standard lattice. Denote by $m_X$ the $G$-invariant
	probability measure on $X$.

	For $t>0$ and $\mathbf{x}\in\R^d$, set
	\begin{equation}\label{eq:a and u}
		a(t)
		=
		\begin{pmatrix}
			t^{1/(d+1)}I_d & 0\\
			0 & t^{-d/(d+1)}
		\end{pmatrix},
		\qquad
		u(\mathbf{x})
		=
		\begin{pmatrix}
			I_d & \mathbf{x}\\
			0 & 1
		\end{pmatrix},
	\end{equation}
	where $I_d$ is the $d\times d$ identity matrix.

	Fix a right-invariant Riemannian metric on $G$, and fix a sufficiently
	small constant $r_0>0$. For $\Delta\in X$, define
	\[
	\inj(\Delta)
	:=
	\sup\left\{
	0<r\le r_0:
	g\mapsto g\Delta
	\text{ is injective on }B_G(e,r)
	\right\},
	\]
	where $B_G(e,r)$ denotes the ball in $G$ centered at the identity element $e$ with radius $r$.
	Note that $\inj(\Delta)$ is small when $\Delta$ lies high in the cusp.

	Let
	\[
	\ell
	:=
	\left\lceil\frac12\dim\SO(d+1)\right\rceil
	=
	\left\lceil\frac{d(d+1)}4\right\rceil,
	\]
	and let $\mathcal S_{\infty,\ell}$ denote a fixed Sobolev norm of order
	$\ell$ on $C^\infty(X)$.

	We will use the following effective equidistribution theorem of
	B\'enard, He and Zhang.

	\begin{theorem}[{\cite[Theorem 1.2]{BenHeZhangfractalRd}}]
		\label{t:BHZ effective equidistribution}
		There exists $\kappa>0$ such that, for every $t\ge1$, every
		$\Delta\in X$, and every smooth function $f$ with
		$\mathcal S_{\infty,\ell}(f)<\infty$, we have
		\begin{equation}\label{eq:BHZ effective equidistribution}
			\left|
			\int_{\R^d}
			f(a(t)u(\mathbf{x})\Delta)\,d\mu(\mathbf{x})
			-
			\int_X f\,dm_X
			\right|
			\ll
			\inj(\Delta)^{-1}
			\mathcal S_{\infty,\ell}(f)
			t^{-\kappa},
		\end{equation}
		where the implied constant depends only on the underlying IFS.
	\end{theorem}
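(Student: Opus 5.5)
This statement is imported from B\'enard--He--Zhang, so the plan below reconstructs the route one would take to prove it. The model case is the absolutely continuous one: if $\mu$ were replaced by a smooth compactly supported density on $\R^d$, the estimate would follow from effective mixing of the $a(t)$-action on $X$ via the Kleinbock--Margulis ``banana'' thickening argument. The plan is therefore to show that, after a bounded amount of time, the pushed measure $a(t')u(\cdot)_*\mu\ast\delta_\Delta$ becomes effectively close to such a smooth measure along the expanding horospherical group $U=\{u(\bx)\}$. The dependence $\inj(\Delta)^{-1}$ and the factor $\mathcal S_{\infty,\ell}(f)$ will then come from the smoothing and thickening steps.

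\textbf{Step 1: renormalize by self-similarity.} By \eqref{eq:self-similar measure}, $\mu=\sum_{|\omega|=n}\rho_\omega^\delta(\phi_\omega)_*\mu$, and \eqref{eq:phi_omega} gives the conjugation identity
\[
u(\rho_\omega O_\omega\bx+\mathbf b_\omega)=u(\mathbf b_\omega)\,h_\omega\,u(\bx)\,h_\omega^{-1},\qquad h_\omega=\begin{pmatrix}\rho_\omega O_\omega&0\\0&1\end{pmatrix}.
\]
Choose $n$ with $\rho_\omega\approx t^{-\theta}$, and absorb the scaling part of $h_\omega$ into $a(t)$. This rewrites the integral as an average, over pieces indexed by $\omega$, of integrals of $f(a(t\rho_\omega^{c})\,k_\omega\,u(\bx)\,\Delta_\omega)\,d\mu(\bx)$, with $k_\omega$ in a compact group and $\Delta_\omega=h_\omega^{-1}u(\mathbf b_\omega)\Delta$. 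This reduces the problem to an equidistribution statement for a random walk on $X$ driven by the IFS.

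\textbf{Step 2: non-concentration and cusp control.} Strong irreducibility gives absolute decay \eqref{eq:absolute decay}, that is, quantitative non-concentration of $\mu$ near affine hyperplanes at all scales. Combined with a Margulis height function on $X$, this yields an effective non-divergence estimate. It shows that $a(s)u(\bx)\Delta$ lies in a compact set $\{\inj\ge\epsilon\}$ outside a set of $\bx$ of $\mu$-measure $\ll\inj(\Delta)^{-1}\epsilon^{c}$. The same estimate handles the base points $\Delta_\omega$ and is the source of the $\inj(\Delta)^{-1}$ factor.

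\textbf{Step 3 (main obstacle): dimension increment along $U$.} The crux is to show that the measure on $X$ obtained by pushing $\mu$ through $a(s)u(\cdot)$ has, at scale $e^{-\eta s}$, \emph{almost full dimension} in the $U$-directions. Only then can one thicken it in the stable and neutral directions to get a density with controlled Sobolev norm, and then apply effective mixing. I would proceed by multiscale induction. $\mu$ has dimension $\delta>0$. Each application of the renormalized random walk convolves the current measure with rotated and scaled copies of $\mu$. A discretized projection or sum-product theorem (Bourgain type), fed by non-concentration on subspaces from strong irreducibility and absolute decay, then gives a dimension gain of $\epsilon(\delta')>0$ whenever the current dimension $\delta'$ is below $d-\eta$. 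Iterating this boundedly many times, with all losses polynomial in $t$, reaches dimension $d-\eta$. The delicate issues are uniformity of the projection theorem across all rotations $O_\omega$, and the fact that the rotational parts do not commute with the cusp geometry. Once near-full dimension is reached, an $L^2$ flattening at scale $t^{-\eta}$ combined with exponential mixing of $a(t)$ on $X$ gives the error $t^{-\kappa}$ for some $\kappa>0$.
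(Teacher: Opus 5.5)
The paper gives no proof of this statement; it is imported from B\'enard--He--Zhang, so your sketch has to be judged on its own terms. Steps 1 and 2 are standard and essentially fine: renormalizing by self-similarity to the walk driven by $\mathfrak m$, and quantitative non-divergence from absolute decay. So is the final thickening-plus-mixing step. The gap is in Step 3, which carries the entire content of the theorem: the mechanism you describe cannot produce a dimension gain.

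The identity $\mu=\sum_{\omega}\rho_\omega^\delta(\phi_\omega)_*\mu$ holds exactly over any stopping family of words. So recombining the rotated and rescaled copies of $\mu$ supplied by the IFS simply returns $\mu$. On each local $U$-plaque, the measure $a(s)u(\cdot)_*\mu*\delta_\Delta$ is a rescaled piece of $\mu$ and has dimension exactly $\delta$, for every $s$. Extra $U$-dimension can only come from the superposition of \emph{different} plaques that the expanded orbit brings close together in $X$. Suppose two such plaques differ by a small displacement $\mathbf w+A+\bv\in\mathfrak u^-\oplus\mathfrak{gl}_d\oplus\mathfrak u$, with $A$ embedded as $\mathrm{diag}(A,-\operatorname{tr}A)$. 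After applying $a(t')u(\bx')$, their $U$-offset is $t'\bigl(\bv-(A+\operatorname{tr}(A)I_d)\bx'-(\mathbf w\cdot\bx')\bx'\bigr)$. This is the family of projections to which a Bourgain-type theorem must be applied, with $\bx'\sim\mu$ as the parameter. It is here, and not through the IFS rotations, that absolute decay and strong irreducibility enter.

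The argument therefore needs an ingredient your plan lacks: a non-concentration (positive-dimension) estimate for the relative positions of returning plaques in the directions transverse to $U$. If nearby plaques are not spread out transversally, the maps above act only through $\bv$, and no projection theorem yields any growth. This is a statement about the orbit in $X=G/\Gamma$: it must use the discreteness of $\Gamma$, not just properties of $\mu$. Your Step 2 only controls excursions into the cusp, so it does not supply this.

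For the same reason, uniformity over the rotations $O_\omega$ is not the delicate point, and the rotations cannot be the source of the gain. The statement covers $d=1$ and systems such as $K_{1/3}$ or $K_{1/3}\times K_{1/3}\subset\R^2$. There all $O_i=I$ and all $\rho_\omega$ are powers of $1/3$, so the IFS provides neither rotations nor independent scalings.
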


	A useful feature of \eqref{eq:BHZ effective equidistribution} is that the
	dependence on $\Delta$ is given explicitly by its injectivity radius.
	We next apply this estimate to each self-similar branch.

	For a word $\omega$, write
	\[
	k_\omega
	:=
	\begin{pmatrix}
		O_\omega&0\\
		0&1
	\end{pmatrix},
	\]
	where $O_\omega$ is given as in \eqref{eq:phi_omega}. Define
	\begin{equation}\label{eq:h omega}
		h_\omega
		:=
		k_\omega^{-1}a(\rho_\omega^{-1})u(\mathbf{b}_\omega).
	\end{equation}
	More generally, for
	\[
	\phi(\mathbf{x})=rO\mathbf{x}+\mathbf{b},
	\]
	we write
	\[
	k_\phi:=
	\begin{pmatrix}
	O&0\\
	0&1
	\end{pmatrix},
	\qquad
	h_\phi
	:=
	k_\phi^{-1}a(r^{-1})u(\mathbf{b}).
	\]
	The correspondence \(\phi\mapsto h_\phi\) is an anti-homomorphism, so the
	composition reverses the order of multiplication. That is, if
	\[
	\omega=(i_1,\ldots,i_n),
	\qquad
	\phi_\omega
	=
	\phi_{i_1}\circ\cdots\circ\phi_{i_n},
	\]
	then
	\begin{equation}\label{eq:antihomomorphism}
		h_\omega
		=
		h_{\phi_\omega}
		=
		h_{i_n}\cdots h_{i_1}.
	\end{equation}
	Moreover, by the definition of \(h_\omega\), we have
	\[
	\begin{aligned}
		k_\omega a(t\rho_\omega)u(\mathbf{x})h_\omega
		&=
		k_\omega a(t\rho_\omega)u(\mathbf{x})
		k_\omega^{-1}
		a(\rho_\omega^{-1})u(\mathbf{b}_\omega)
		\\
		&=
		a(t\rho_\omega)
		u(O_\omega\mathbf{x})
		a(\rho_\omega^{-1})u(\mathbf{b}_\omega)
		\\
		&=
		a(t)
		u(\rho_\omega O_\omega\mathbf{x})
		u(\mathbf{b}_\omega)
		\\
		&=
		a(t)
		u(\rho_\omega O_\omega\mathbf{x}+\mathbf{b}_\omega)
		\\
		&=
		a(t)u(\phi_\omega(\mathbf{x})).
	\end{aligned}
	\]
	Since $\mu_\omega=(\phi_\omega)_*\mu$, it follows that
	\[
	\int_{\R^d}
	f(a(t)u(\mathbf{x})\Delta)\,d\mu_\omega(\mathbf{x})
	=
	\int_{\R^d}
	f\big(
	k_\omega a(t\rho_\omega)u(\mathbf{x})h_\omega\Delta
	\big)\,d\mu(\mathbf{x}).
	\]
	The matrices $k_\omega$ lie in a fixed compact subgroup of $G$.
	Therefore Theorem~\ref{t:BHZ effective equidistribution}, applied to the
	test function $f(k_\omega\,\cdot)$, gives the following estimate.

	\begin{corollary}
		\label{c:retained injectivity equidistribution}
		Let $\omega\in\Lambda^*$, let $t\ge \rho_\omega^{-1}$, and let
		$\Delta\in X$. Then, for every smooth function $f$ satisfying
		$\mathcal S_{\infty,\ell}(f)<\infty$, we have
		\begin{align}
			\left|
			\int_{\R^d}
			f(a(t)u(\mathbf{x})\Delta)\,d\mu_\omega(\mathbf{x})
			-
			\int_X f\,dm_X
			\right|
			\ll
			\inj(h_\omega\Delta)^{-1}
			\mathcal S_{\infty,\ell}(f)
			(t\rho_\omega)^{-\kappa}.
			\label{eq:retained injectivity equidistribution}
		\end{align}
		The exponent $\kappa$ is the same as in
		Theorem~\ref{t:BHZ effective equidistribution}, and the implied
		constant depends only on the underlying IFS.
	\end{corollary}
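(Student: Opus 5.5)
The corollary is a direct change of variables followed by an application of Theorem~\ref{t:BHZ effective equidistribution} at the reduced time $t\rho_\omega\ge1$ and base point $h_\omega\Delta$. From the identity $k_\omega a(t\rho_\omega)u(\mathbf{x})h_\omega=a(t)u(\phi_\omega(\mathbf{x}))$ computed above and $\mu_\omega=(\phi_\omega)_*\mu$, I would write
\[
\int_{\R^d} f(a(t)u(\mathbf{x})\Delta)\,d\mu_\omega(\mathbf{x})
=\int_{\R^d} f_\omega\bigl(a(t\rho_\omega)u(\mathbf{x})\,h_\omega\Delta\bigr)\,d\mu(\mathbf{x}),
\qquad f_\omega(y):=f(k_\omega y).
\]
Since $m_X$ is $G$-invariant, $\int_X f_\omega\,dm_X=\int_X f\,dm_X$. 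The hypothesis $t\ge\rho_\omega^{-1}$ gives $t\rho_\omega\ge1$, so Theorem~\ref{t:BHZ effective equidistribution} applies with $t\rho_\omega$ in place of $t$, $h_\omega\Delta$ in place of $\Delta$, and $f_\omega$ in place of $f$. This bounds the difference by a constant (depending only on the IFS) times $\inj(h_\omega\Delta)^{-1}\mathcal S_{\infty,\ell}(f_\omega)(t\rho_\omega)^{-\kappa}$.

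It remains to compare $\mathcal S_{\infty,\ell}(f_\omega)$ with $\mathcal S_{\infty,\ell}(f)$. The Sobolev norm is built from sup norms of derivatives along a basis of $\mathfrak g$ acting by right-invariant vector fields on $X$ (or left translations, according to convention). Left translation by $k$ conjugates these derivatives by $\mathrm{Ad}(k)$. Because $k_\omega$ lies in the compact group $\{\mathrm{diag}(O,1):O\in\SO(d)\}$, the operators $\mathrm{Ad}(k_\omega)$ are uniformly bounded, and sup norms are invariant under translation. Hence $\mathcal S_{\infty,\ell}(f_\omega)\ll\mathcal S_{\infty,\ell}(f)$ with a constant depending only on $d$ and $\ell$, uniformly in $\omega$. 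If the norm also carries a height weight on $X$, that weight is comparable under compact translations, so the bound persists.

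Combining these two steps yields \eqref{eq:retained injectivity equidistribution} with the same $\kappa$ and an implied constant depending only on the IFS. The only point needing care is this uniform control of the Sobolev norm under left translation by the compact family $k_\omega$. Everything else is formal.
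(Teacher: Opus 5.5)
Your proposal is correct and is exactly the paper's argument. The paper also uses the change of variables $k_\omega a(t\rho_\omega)u(\mathbf{x})h_\omega=a(t)u(\phi_\omega(\mathbf{x}))$ and then applies Theorem~\ref{t:BHZ effective equidistribution} at time $t\rho_\omega\ge1$, base point $h_\omega\Delta$, and test function $f(k_\omega\,\cdot)$. You additionally make explicit two points the paper leaves implicit: that $\int_X f(k_\omega\,\cdot)\,dm_X=\int_X f\,dm_X$ by $G$-invariance, and that $\mathcal S_{\infty,\ell}(f(k_\omega\,\cdot))\ll\mathcal S_{\infty,\ell}(f)$ uniformly because the $k_\omega$ lie in a fixed compact subgroup.
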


	\begin{remark}
		The usual uniform estimate is obtained by further bounding
		$\inj(h_\omega\Delta)^{-1}$ in terms of $\rho_\omega^{-1}$.
		We do not make this replacement. Instead, we keep the actual
		branch dependent quantity $\inj(h_\omega\Delta)^{-1}$ and use effective
		recurrence in the next section to show that it is sufficiently small on
		all but a small family of branches.
	\end{remark}

	We next turn this branchwise equidistribution estimate into rational
	counting. Recall that for $Q\ge1$ and $\eta>0$,
	\begin{equation}\label{eq:primitive shell}
		A_Q(\eta)
		=
		\left\{
		\mathbf{x}\in\R^d:
		\begin{array}{l}
			\text{there exist }(\mathbf{p},q)\in\Z^d\times\N\text{ such that}\\
			Q<q\le2Q,\qquad
			|q\mathbf{x}-\mathbf{p}|<\eta
		\end{array}
		\right\}.
	\end{equation}
	We shall apply a standard smooth approximation to the corresponding cusp
	target. For completeness, let
	\[
	\mathcal E_Q(\eta)
	:=
	\left\{
	\mathbf{x}\in\R^d:
	\begin{array}{l}
		\text{there exists a primitive }
		(\mathbf{p},q)\in\Z^d\times\N\text{ such that}\\
		0<q\le Q,\qquad
		|q\mathbf{x}-\mathbf{p}|<\eta
	\end{array}
	\right\}.
	\]
	Set
	\begin{equation}\label{eq:high dimensional R T}
		R:=(Q\eta^d)^{\frac{1}{d+1}},
		\qquad
		T:=\frac{Q}{\eta},
	\end{equation}
	and define
	\[
	\mathscr R_{Q,\eta}:
	=
	(-R,R)^d\times(0,R].
	\]
	Then, the Dani correspondence gives
	\begin{equation}\label{eq:high dimensional Dani correspondence}
		\bx\in\mathcal E_Q(\eta)
		\quad\Longleftrightarrow\quad
		a(T)u(\bx)\Delta_0
		\text{ contains a primitive vector in }
		\mathscr R_{Q,\eta}.
	\end{equation}
	Indeed,
	\[
	a(T)u(\bx)
	\binom{-\bp}{q}
	=
	\binom{
		T^{1/(d+1)}(q\bx-\bp)
	}{
		T^{-d/(d+1)}q
	}.
	\]
	By \eqref{eq:high dimensional R T},
	\[
	T^{1/(d+1)}\eta
	=
	T^{-d/(d+1)}Q
	=
	R.
	\]
	Hence,
	\[
	|q\bx-\bp|<\eta,
	\qquad
	0<q\le Q
	\]
	is equivalent to
	\[
	a(T)u(\bx)
	\binom{-\bp}{q}
	\in
	\mathscr R_{Q,\eta}.
	\]

	Let
	\[
	\cP(\Z^{d+1})
	:=
	\left\{
	(\bp,q)\in\Z^d\times\Z:
	\gcd(p_1,\ldots,p_d,q)=1
	\right\}.
	\]
	For a compactly supported function
	\(F:\R^{d+1}\to\R\), define its primitive Siegel transform by
	\[
	\widetilde F(g\Delta_0)
	=
	\sum_{\bv\in\cP(\Z^{d+1})}
	F(g\bv).
	\]
	Then, \eqref{eq:high dimensional Dani correspondence} is equivalently the
	condition
	\[
	\widetilde{\cha}_{\mathscr R_{Q,\eta}}
	\bigl(a(T)u(\bx)\Delta_0\bigr)>0.
	\]
	Adapting the smoothing argument of Khalil--Luethi
	\cite[Proposition 8.4 and the proofs of Theorem 9.1 and Lemma 12.7]
	{KhalilLuethifractalInvent},
	we obtain the following.

	\begin{lemma}
		\label{l:high dimensional smoothing}
		For every fixed sufficiently small $\theta>0$, every sufficiently large
		$Q$, and every
		\[
		0<\eta\le Q^{-1/d},
		\]
		there are nonnegative smooth functions
		\[
		\Phi^-_{Q,\eta},\Phi^+_{Q,\eta}\in C^\infty(X),
		\]
		invariant under the left action of
		$\operatorname{diag}(\SO(d),1)$, such that, with $T=Q/\eta$,
		\begin{equation}\label{eq:smoothing sandwich}
			\Phi^-_{Q,\eta}
			(a(T)u(\mathbf{x})\Delta_0)
			\le
			\cha_{\mathcal E_Q(\eta)}(\mathbf{x})
			\le
			\Phi^+_{Q,\eta}
			(a(T)u(\mathbf{x})\Delta_0),
		\end{equation}
		\begin{equation}\label{eq:smoothing norm}
			\mathcal S_{\infty,\ell}(\Phi^\pm_{Q,\eta})
			\ll_\theta 1,
		\end{equation}
		and, for some constant $c_d>0$,
		\begin{align}
			\int_X\Phi^-_{Q,\eta}\,dm_X
			&\ge
			(1-\theta)c_dQ\eta^d
			-
			\mathrm{O}_\theta\bigl((Q\eta^d)^2\bigr),
			\label{eq:smoothing lower mean}\\
			\int_X\Phi^+_{Q,\eta}\,dm_X
			&\le
			(1+\theta)c_dQ\eta^d,
			\label{eq:smoothing upper mean}
		\end{align}
		where the implied constant in $\mathrm{O}_\theta(\cdot)$ is independent of $Q$ and $\eta$.
	\end{lemma}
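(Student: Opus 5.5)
We build $\Phi^\pm_{Q,\eta}$ as primitive Siegel transforms of smoothed versions of $\cha_{\mathscr R_{Q,\eta}}$, followed by a cusp cutoff, as in Khalil--Luethi.

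\emph{Construction of the smooth profiles.} Fix a small $\theta>0$. Since $Q\eta^d\le 1$, we have $R\le1$. We take compactly supported smooth functions $F^\pm:\R^{d+1}\to[0,1]$ with
\[
F^-\le\cha_{\mathscr R_{Q,\eta}}\le F^+,
\]
built so that they are invariant under $\operatorname{diag}(\SO(d),1)$. To get this invariance we work with the Euclidean ball in the first $d$ coordinates rather than the sup-norm cube, possibly at the cost of replacing $\mathscr R_{Q,\eta}$ by a rotation-invariant box sandwiched between slightly shrunk and slightly enlarged copies. This only changes the constant $c_d$, provided the sandwich is $\SO(d)$-invariant. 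If the sup-norm box must be kept exactly, we instead average over $\SO(d)$ after the sandwich; averaging preserves the inequalities only up to an inner/outer rotation-invariant box, so we sandwich by balls of radius $R$ and $\sqrt d R$ and absorb the discrepancy into $c_d$.

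We take $F^\pm$ to be $\theta R$-smoothings, i.e.\ equal to $1$ on a $(1\mp\theta)$-dilate of the region and supported on it. To keep Sobolev norms bounded independently of $Q,\eta$, we do not rescale by $R$. Instead we write
\[
F^\pm=F_1^\pm\circ a'_R,
\]
where $a'_R$ is the scalar dilation by $R^{-1}$, and use the fact that the Siegel transform of a rescaled function is the Siegel transform of $F_1^\pm$ evaluated at the scaled lattice. Scalar dilation is not in $G$, so this route is unavailable. The approach of Khalil--Luethi instead exploits that $R\le 1$ is small: a lattice $\Delta$ with $\widetilde{\cha}_{\mathscr R}(\Delta)>0$ has a primitive vector of length $\le R$, so $\Delta$ lies in the cusp region. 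We therefore smooth in $G$-direction: set
\[
\Phi^\pm=\widetilde{F}^\pm*\beta_\theta,
\]
a convolution on $G$ with a fixed bump $\beta_\theta$ supported in $B_G(e,\theta')$. Wiggling $g$ by $\theta'$ moves $g\bv$ by a relative factor $1+\mathrm{O}(\theta')$. Hence, with $F^\pm$ taken as the indicators of the $(1\pm\theta)$-dilated regions, we obtain
\[
\widetilde{\cha}_{(1-\theta)\mathscr R}*\beta\le\widetilde{\cha}_{\mathscr R}\le\widetilde{\cha}_{(1+\theta)\mathscr R}*\beta
\]
pointwise. Left-averaging over $\operatorname{diag}(\SO(d),1)$ gives the invariance.

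\emph{The sandwich \eqref{eq:smoothing sandwich}.} The function $\cha_{\mathcal E_Q(\eta)}$ is the indicator that the Siegel transform is positive, not the transform itself. For the upper bound we use
\[
\cha\le\widetilde{\cha}_{\mathscr R}.
\]
For the lower bound we need $\Phi^-\le\cha_{\{\widetilde{\cha}>0\}}$, which we obtain by taking $\Phi^-=\min(\widetilde{\cdot}*\beta,\,1)$, smoothed appropriately. The obstacle is that $\min$ destroys smoothness. We fix this by composing with a smooth function $\psi$ satisfying $\psi(s)\le\min(s,1)$ and $\psi(s)\ge s-s^2$. The error in the mean is then controlled by the second moment $\int\widetilde F^2\,dm_X$. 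The Rogers mean value formula, for primitive vectors, shows that the off-diagonal pairs contribute $\mathrm{O}((Q\eta^d)^2)$. The diagonal pairs $\bv,\pm\bv$ are handled by the symmetry of the region: $q>0$ removes $-\bv$. This produces exactly the $\mathrm{O}_\theta((Q\eta^d)^2)$ term in \eqref{eq:smoothing lower mean}.

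\emph{Means and Sobolev norms.} By Siegel's formula,
\[
\int_X\widetilde F\,dm_X=\zeta(d+1)^{-1}\int_{\R^{d+1}}F,
\]
and
\[
\operatorname{vol}\bigl((1\pm\theta)\mathscr R_{Q,\eta}\bigr)=(1\pm\theta)^{d+1}2^dR^{d+1}=(1\pm\theta)^{d+1}2^dQ\eta^d.
\]
This gives \eqref{eq:smoothing lower mean} and \eqref{eq:smoothing upper mean} with $c_d=2^d/\zeta(d+1)$, after renaming $\theta$.

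\textbf{Main obstacle: the Sobolev bound \eqref{eq:smoothing norm}.} $\widetilde F$ is unbounded in the cusp, while $\mathcal S_{\infty,\ell}$ is a sup-type norm. Because $R\le1$, however, $\widetilde{\cha}_{\mathscr R}$ counts primitive vectors in a set of diameter $\ll1$. By Minkowski, a unimodular lattice has at most one primitive direction (up to sign) of length $<c$, and only boundedly many primitive vectors of length $\le 2$ unless the lattice is deep in the cusp. So $\widetilde F$ could still be large deep in the cusp. Following Khalil--Luethi, we multiply by a smooth cusp cutoff supported where $\inj\gg_\theta 1$ in the relevant directions. Equivalently, we bound the count of primitive vectors in a box of size $\le2$ by a function of the successive minima, and truncate where the second minimum is $\ge\theta$. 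The discarded mass is $\ll_\theta(Q\eta^d)^2$, again by Rogers' second moment, since it requires two independent short vectors. After truncation, derivatives of order $\ell$ of $\psi(\widetilde F*\beta)$ are bounded by $\ll_\theta1$: convolution transfers the derivatives onto $\beta_\theta$, and the truncated count is bounded. This is exactly the step that must be checked carefully against \cite[Prop.~8.4]{KhalilLuethifractalInvent}.
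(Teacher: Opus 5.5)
Your proposal has genuine gaps at each place where the lemma has content. For the lower mean \eqref{eq:smoothing lower mean}, the condition $\psi(s)\ge s-s^2$ only gives $\int_X\Phi^-\,dm_X\ge\int_X g\,dm_X-\int_X g^2\,dm_X$ with $g=\widetilde F^-*\beta_\theta$. The second moment is dominated by the diagonal terms $\sum_{\bv}F^-(\bv)^2$, whose mean is $\zeta(d+1)^{-1}\int(F^-)^2\asymp Q\eta^d$, the same order as the main term. The condition $q>0$ removes only the pairs $(\bv,-\bv)$, not $(\bv,\bv)$. Indeed $g=1$ on most of its support, where $s-s^2=0$, so this route yields no main term at all.

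For the upper function you keep the smoothed Siegel transform. It is unbounded, so $\mathcal S_{\infty,\ell}(\Phi^+)=\infty$. Multiplying by a cusp cutoff then destroys the pointwise majorant $\Phi^+(a(T)u(\bx)\Delta_0)\ge\cha_{\mathcal E_Q(\eta)}(\bx)$ at orbit points deep in the cusp, and smallness of the discarded mass is irrelevant for a pointwise inequality.

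The missing idea is to smooth the indicator of the cusp set, not the Siegel transform. Let $Y_E:=\{\Delta\in X:\widetilde{\cha}_E(\Delta)>0\}$, and let $E^\pm$ be outer/inner neighbourhoods of $\mathscr R_{Q,\eta}$ at relative scale $\theta$. Near the face $q=0$ these must be slabs, not dilates, since the relevant orbit vectors have last coordinate as small as $R/Q$. Set $\Phi^\pm:=\cha_{Y_{E^\pm}}*\beta_\theta$. Then:
\begin{itemize}
\item $0\le\Phi^\pm\le1$, and all derivatives fall on $\beta_\theta$, so $\mathcal S_{\infty,\ell}(\Phi^\pm)\ll_\theta1$.
\item The sandwich $\Phi^-\le\cha_{Y_{\mathscr R_{Q,\eta}}}\le\Phi^+$ holds pointwise on $X$.
\item $\int_X\Phi^+\,dm_X=m_X(Y_{E^+})\le\int_X\widetilde{\cha}_{E^+}\,dm_X$ by Siegel.
\item $\int_X\Phi^-\,dm_X=m_X(Y_{E^-})\ge\int_X\bigl(N-\binom{N}{2}\bigr)\,dm_X$ with $N=\widetilde{\cha}_{E^-}$.
\end{itemize}
Since $\binom{N}{2}$ counts only distinct non-antipodal, hence independent, pairs, Rogers' formula makes it $\mathrm O((Q\eta^d)^2)$ for $d\ge2$. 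For $d=1$ it vanishes once $R$ is small, because independent vectors of a unimodular planar lattice have $|\det|\ge1$, and it is bounded otherwise. This is where the quadratic error term comes from.

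Your treatment of the $\operatorname{diag}(\SO(d),1)$-invariance also fails. Estimates \eqref{eq:smoothing lower mean} and \eqref{eq:smoothing upper mean} use the same $c_d$ with $\theta$ arbitrarily small. Your inner and outer invariant regions (Euclidean radii $R$ and $\sqrt d\,R$) have volumes in ratio $d^{d/2}$, which cannot be absorbed into $c_d$.

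In fact, for $d\ge2$ no invariant pair satisfies the statement as written. Fix $Q\eta^d=R^{d+1}$ and let $Q\to\infty$. The points $a(T)u(\bx)\Delta_0$, $\bx\in[0,1]^d$, become dense in every compact part of $X$. Invariance, the sandwich, and the uniform Lipschitz bound from \eqref{eq:smoothing norm} then force:
\begin{itemize}
\item $\Phi^+\ge1-o(1)$ on lattices with a primitive vector in $\{|\bx|_2<\sqrt d\,R\}\times(0,R)$;
\item $\Phi^-\le o(1)$ on lattices with no primitive vector near $\{|\bx|_2\le R\}\times[0,R]$, outside a set of mass $\mathrm O(R^{2(d+1)})$;
\item $\Phi^-\le1+o(1)$ on compact parts.
\end{itemize}
Comparing with the two mean bounds via Siegel's formula and letting $R\to0$ gives $(1+\theta)/(1-\theta)\ge d^{d/2}$.

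The invariance is never used: Corollary~\ref{c:retained injectivity equidistribution} only needs $\mathcal S_{\infty,\ell}(f(k\,\cdot))\ll\mathcal S_{\infty,\ell}(f)$ for $k$ in a fixed compact group. So the correct move is to drop it and keep the sup-norm box, with $c_d=2^d/\zeta(d+1)$. The paper itself only cites Khalil--Luethi here, so these are exactly the steps a complete proof must supply.
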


	Combining this lemma with the branchwise equidistribution estimate,
	Corollary~\ref{c:retained injectivity equidistribution}, gives the rational
	counting statement needed later.

	\begin{proposition}
		\label{p:retained injectivity counting}
		There exists $c_0>0$ such that the following holds. Let $Q$ be
		sufficiently large and suppose that
		\[
		0<\eta\le c_0Q^{-1/d}.
		\]
		Let $\omega\in\Lambda^*$ satisfy
		\[
		T\rho_\omega\ge1
		\qquad\text{with}\ \
		T=\frac{Q}{\eta}.
		\]
		Then,
		\begin{equation}\label{eq:retained lower count}
			\mu_\omega(A_Q(\eta))
			\gg
			Q\eta^d
			-
			\mathrm{O}\left(
			\inj(h_\omega\Delta_0)^{-1}
			(T\rho_\omega)^{-\kappa}
			\right),
		\end{equation}
		and
		\begin{equation}\label{eq:retained upper count}
			\mu_\omega(A_Q(\eta))
			\ll
			Q\eta^d
			+
			\mathrm{O}\left(
			\inj(h_\omega\Delta_0)^{-1}
			(T\rho_\omega)^{-\kappa}
			\right),
		\end{equation}
		where $\kappa>0$ is the exponent appearing in
		Theorem~\ref{t:BHZ effective equidistribution}, and the implied constants in $\mathrm{O}(\cdot)$
		depend only on the underlying IFS.
	\end{proposition}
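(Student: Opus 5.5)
The plan is to relate the dyadic shell $A_Q(\eta)$, with $Q<q\le 2Q$, to the cumulative primitive sets $\mathcal E_{Q'}(\eta')$. Then I apply Lemma~\ref{l:high dimensional smoothing} together with Corollary~\ref{c:retained injectivity equidistribution} to the branch measure $\mu_\omega$.

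For the upper bound, note that any solution $(\bp,q)$ with $Q<q\le 2Q$ and $|q\bx-\bp|<\eta$ can be reduced to a primitive pair $(\bp',q')$ with $q'\le 2Q$ and $|q'\bx-\bp'|<\eta$. Hence $A_Q(\eta)\subset\mathcal E_{2Q}(\eta)$. We have $\eta\le c_0Q^{-1/d}\le (2Q)^{-1/d}$ once $c_0\le 2^{-1/d}$. Using the upper function in \eqref{eq:smoothing sandwich} at the parameters $(2Q,\eta)$, with $T'=2Q/\eta=2T$, gives
\[
\mu_\omega(A_Q(\eta))\le\int\Phi^+_{2Q,\eta}(a(2T)u(\bx)\Delta_0)\,d\mu_\omega(\bx).
\]
Since $2T\rho_\omega\ge1\ge\rho_\omega^{-1}\rho_\omega$, i.e.\ $2T\ge\rho_\omega^{-1}$, Corollary~\ref{c:retained injectivity equidistribution} applies. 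Combined with \eqref{eq:smoothing norm} (with $\theta$ fixed) and \eqref{eq:smoothing upper mean}, it yields
\[
\mu_\omega(A_Q(\eta))\le (1+\theta)c_d\,2Q\eta^d+\mathrm{O}\bigl(\inj(h_\omega\Delta_0)^{-1}(T\rho_\omega)^{-\kappa}\bigr).
\]
This is \eqref{eq:retained upper count}.

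For the lower bound I use a difference of cumulative sets. Any $\bx\in\mathcal E_{2Q}(\eta)\setminus\mathcal E_Q(\eta)$ has a primitive solution with $Q<q\le2Q$, so $A_Q(\eta)\supset\mathcal E_{2Q}(\eta)\setminus\mathcal E_Q(\eta)$ and
\[
\mu_\omega(A_Q(\eta))\ge\mu_\omega(\mathcal E_{2Q}(\eta))-\mu_\omega(\mathcal E_Q(\eta)).
\]
I bound the first term below with $\Phi^-_{2Q,\eta}$ and the second above with $\Phi^+_{Q,\eta}$, both through Corollary~\ref{c:retained injectivity equidistribution}. The parameters are $2T$ and $T$ respectively, and both satisfy $\ge\rho_\omega^{-1}$. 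The main terms give
\[
(1-\theta)c_d2Q\eta^d-(1+\theta)c_dQ\eta^d-\mathrm{O}_\theta\bigl((Q\eta^d)^2\bigr)=(1-3\theta)c_dQ\eta^d-\mathrm{O}_\theta\bigl((Q\eta^d)^2\bigr).
\]
The error terms from the two applications of the corollary are both $\ll\inj(h_\omega\Delta_0)^{-1}(T\rho_\omega)^{-\kappa}$, with constants depending on $\theta$. I now fix $\theta=1/6$. Then $Q\eta^d\le c_0^d$, so choosing $c_0$ small absorbs the quadratic term, since $\mathrm{O}((Q\eta^d)^2)\le \frac14 c_dQ\eta^d$. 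This gives \eqref{eq:retained lower count}.

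The main obstacle is making the lower bound genuinely positive. The dyadic shell is obtained as a difference of two approximately equal-order quantities, so the main terms must not cancel. This works only because $\mathcal E_{2Q}$ has roughly twice the mass of $\mathcal E_Q$, which requires $\theta$ small. The second point to watch is the quadratic error from Lemma~\ref{l:high dimensional smoothing}; this is exactly why the hypothesis strengthens $\eta\le Q^{-1/d}$ to $\eta\le c_0Q^{-1/d}$. I also need to check that the $\SO(d)$-invariance of $\Phi^\pm$ makes the rotation $k_\omega$ in the corollary harmless, which the corollary already takes into account.
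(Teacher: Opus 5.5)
Your proposal is correct and follows the paper's proof essentially verbatim. It uses the same inclusions $\mathcal E_{2Q}(\eta)\setminus\mathcal E_Q(\eta)\subset A_Q(\eta)\subset\mathcal E_{2Q}(\eta)$, the same lower bound via $\Phi^-_{2Q,\eta}$ minus $\Phi^+_{Q,\eta}$ through Corollary~\ref{c:retained injectivity equidistribution}, a fixed small $\theta$ with $2(1-\theta)-(1+\theta)>0$, and $c_0$ small enough to absorb the quadratic term. Your explicit check that $\eta\le(2Q)^{-1/d}$ (via $c_0\le 2^{-1/d}$) is a detail the paper leaves implicit. One small adjustment: read your choice $\theta=1/6$ as $\theta=\min\{1/6,\theta_0\}$, where $\theta_0$ is the smallness threshold in Lemma~\ref{l:high dimensional smoothing}.
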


	\begin{proof}
		By \eqref{eq:smoothing sandwich},
		\[
		\mu_\omega(\mathcal E_Q(\eta))
		\ge
		\int_{\R^d}
		\Phi^-_{Q,\eta}
		(a(T)u(\mathbf{x})\Delta_0)
		\,d\mu_\omega(\mathbf{x}).
		\]
		Since $T\rho_\omega\ge1$, Corollary~
		\ref{c:retained injectivity equidistribution} gives
		\[
		\begin{aligned}
			\mu_\omega(\mathcal E_Q(\eta))
			\ge{}\;&
			\int_X\Phi^-_{Q,\eta}\,dm_X-
			\mathrm{O}\left(
			\inj(h_\omega\Delta_0)^{-1}
			\mathcal S_{\infty,\ell}(\Phi^-_{Q,\eta})
			(T\rho_\omega)^{-\kappa}
			\right).
		\end{aligned}
		\]
		Using \eqref{eq:smoothing norm} and
		\eqref{eq:smoothing lower mean}, we obtain
		\begin{equation}\label{eq:cumulative lower count}
			\mu_\omega(\mathcal E_Q(\eta))
			\ge
			(1-\theta)c_dQ\eta^d
			-
			\mathrm{O}_\theta\bigl((Q\eta^d)^2\bigr)
			-
			\mathrm{O}_\theta\left(
			\inj(h_\omega\Delta_0)^{-1}
			(T\rho_\omega)^{-\kappa}
			\right).
		\end{equation}
		Similarly,
		\begin{equation}\label{eq:cumulative upper count}
			\mu_\omega(\mathcal E_Q(\eta))
			\le
			(1+\theta)c_dQ\eta^d
			+
			\mathrm{O}_\theta\left(
			\inj(h_\omega\Delta_0)^{-1}
			(T\rho_\omega)^{-\kappa}
			\right).
		\end{equation}

		Note that
		\[
		\mathcal E_{2Q}(\eta)\setminus\mathcal E_Q(\eta)
		\subset
		A_Q(\eta)
		\subset
		\mathcal E_{2Q}(\eta).
		\]
		Hence,
		\[
		\mu_\omega(A_Q(\eta))
		\ge
		\mu_\omega(\mathcal E_{2Q}(\eta))
		-
		\mu_\omega(\mathcal E_Q(\eta)).
		\]
		Applying \eqref{eq:cumulative lower count} with $2Q$ and
		\eqref{eq:cumulative upper count} with $Q$, we obtain
		\[
		\begin{aligned}
			\mu_\omega(A_Q(\eta))
			\ge{}\;&
			\bigl(2(1-\theta)-(1+\theta)\bigr)c_dQ\eta^d
			-
			\mathrm{O}_\theta\bigl((Q\eta^d)^2\bigr)
			\\
			&-
			\mathrm{O}_\theta\left(
			\inj(h_\omega\Delta_0)^{-1}
			(T\rho_\omega)^{-\kappa}
			\right).
		\end{aligned}
		\]
			Choose $\theta>0$ sufficiently small so that
		\[
		2(1-\theta)-(1+\theta)>0,
		\]
		and fix this choice of $\theta$ for the remainder of the proof.
		Since
		\[
		Q\eta^d\le c_0^d,
		\]
		choosing $c_0>0$ sufficiently small allows the quadratic error term $\mathrm{O}_\theta((Q\eta^d)^2)$ to be
		absorbed into the main term. This proves
		\eqref{eq:retained lower count}.

		For the upper bound, since
		\[
		A_Q(\eta)\subset\mathcal E_{2Q}(\eta),
		\]
		equation \eqref{eq:cumulative upper count}, applied with $2Q$ in place of
		$Q$, yields
		\[
		\mu_\omega(A_Q(\eta))
		\le
		2(1+\theta)c_dQ\eta^d
		+
		\mathrm{O}\left(
		\inj(h_\omega\Delta_0)^{-1}
		(2T\rho_\omega)^{-\kappa}
		\right).
		\]
		Since $(2T\rho_\omega)^{-\kappa}\ll
		(T\rho_\omega)^{-\kappa}$, this gives
		\eqref{eq:retained upper count}.
	\end{proof}

\section{Sharp counting on good branches and effective recurrence}
\label{s:effective recurrence}

	We first introduce a family of  branches and show that the counting
	estimate from the previous section is sharp whenever
	$h_\omega\Delta_0$ stays sufficiently far away from the cusp. We then apply the
	effective recurrence theorem of B\'enard, He and Zhang \cite{BenHeZhangfractalRd} (see Theorem \ref{t:BHZ recurrence} below) to show that the
	branches for which this fails have small total mass.

	Choose once and for all
	\begin{equation}\label{eq:choice beta}
		\frac{(d-\alpha_0)(d+1)}{d^2}
		<
		\beta
		<
		1+\frac1d,
	\end{equation}
	where $\alpha_0$ is given in \eqref{eq:absolute decay}.
	Such a choice is possible since $\alpha_0>0$.

	For $Q\ge1$, define
	\begin{equation}\label{eq:stopping antichain}
		\mathcal S_Q(\beta)
		:=
		\left\{
		\omega\in\Lambda^*:
		\rho_\omega\le Q^{-\beta}
		<
		\rho_{\omega^-}
		\right\}.
	\end{equation}
	Then, every $\omega\in\mathcal S_Q(\beta)$ satisfies
	\begin{equation}\label{eq:stopping branch scale}
		\rho_\omega\asymp Q^{-\beta}.
	\end{equation}
	Moreover, since the set of contraction ratios is finite, there exist
	constants $c_\beta^-,c_\beta^+>0$ such that
	\begin{equation}\label{eq:stopping word length}
		c_\beta^-\log Q
		\le
		|\omega|
		\le
		c_\beta^+\log Q
	\end{equation}
	for every $\omega\in\mathcal S_Q(\beta)$ and all sufficiently large $Q$.

	For any $u\in\Lambda^*$, write
	\begin{equation}\label{eq:conditional stopping antichain}
		\mathcal S_Q^u(\beta)
		:=
		\left\{
		uv\in\mathcal S_Q(\beta):
		v\in\Lambda^*
		\right\}.
	\end{equation}
	Then, $\mathcal S_Q^u(\beta)$ consists precisely of those words in
	$\mathcal S_Q(\beta)$ having $u$ as a prefix.

	Let $\kappa>0$ be the exponent appearing in
	Theorem~\ref{t:BHZ effective equidistribution}. Choose
	\begin{equation}\label{eq:choice xi}
		0<\xi<
		\kappa\left(1+\frac1d-\beta\right).
	\end{equation}
	We call a word $\omega\in\mathcal S_Q(\beta)$ \emph{good} if
	\begin{equation}\label{eq:good branch}
		\inj(h_\omega\Delta_0)\ge Q^{-\xi},
	\end{equation}
	and \emph{bad} otherwise.

	\subsection{Sharp counting on good branches}

	\begin{proposition}
		\label{p:good branch counting}
		There exists $\varepsilon>0$ such that, for every
		\[
		\frac1d<\tau<\frac1d+\varepsilon,
		\]
		every sufficiently large $Q$, and every good word
		$\omega\in\mathcal S_Q(\beta)$, we have
		\begin{equation}\label{eq:good branch counting}
			\mu_\omega(A_Q(c_0Q^{-\tau}))
			\asymp
			Q^{1-d\tau},
		\end{equation}
		where $c_0>0$ is chosen sufficiently small as in
		Proposition~\ref{p:retained injectivity counting}, and the implied
		constants are independent of $Q$ and $\omega$.
	\end{proposition}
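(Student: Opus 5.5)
Apply Proposition~\ref{p:retained injectivity counting} with $\eta=c_0Q^{-\tau}$. For $\tau>1/d$ and $Q$ large, $\eta\le c_0Q^{-1/d}$, so the hypothesis on $\eta$ holds. The main term is $Q\eta^d=c_0^dQ^{1-d\tau}$. It then suffices to show that, for good $\omega\in\mathcal S_Q(\beta)$, the error term
\[
\inj(h_\omega\Delta_0)^{-1}(T\rho_\omega)^{-\kappa}
\]
is $o(Q^{1-d\tau})$ as $Q\to\infty$, uniformly in $\omega$. Once this is known, \eqref{eq:retained lower count} and \eqref{eq:retained upper count} together give \eqref{eq:good branch counting}.

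First we check $T\rho_\omega\ge1$. Here $T=Q/\eta=c_0^{-1}Q^{1+\tau}$. By \eqref{eq:stopping branch scale}, $\rho_\omega\asymp Q^{-\beta}$, so
\[
T\rho_\omega\asymp Q^{1+\tau-\beta}.
\]
Since $\tau>1/d$ and $\beta<1+1/d$ by \eqref{eq:choice beta}, the exponent $1+\tau-\beta$ exceeds $1+1/d-\beta>0$. Hence $T\rho_\omega\to\infty$, and in particular $T\rho_\omega\ge1$ for $Q$ large, uniformly over $\omega\in\mathcal S_Q(\beta)$.

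Next we bound the error term. The goodness condition \eqref{eq:good branch} gives $\inj(h_\omega\Delta_0)^{-1}\le Q^{\xi}$, so
\[
\inj(h_\omega\Delta_0)^{-1}(T\rho_\omega)^{-\kappa}\ll Q^{\xi-\kappa(1+\tau-\beta)}\le Q^{\xi-\kappa(1+1/d-\beta)}.
\]
The implied constant depends only on the IFS and on $\beta$ (through the constant in $\rho_\omega\asymp Q^{-\beta}$), not on $\omega$. Set
\[
\gamma:=\kappa(1+1/d-\beta)-\xi,
\]
which is positive by \eqref{eq:choice xi}. The error is then $\ll Q^{-\gamma}$. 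The main term is $Q^{1-d\tau}=Q^{-d(\tau-1/d)}$, so we need $d(\tau-1/d)<\gamma$, that is, $\tau<1/d+\gamma/d$. Taking $\varepsilon=\gamma/d$ (or slightly smaller), every $\tau\in(1/d,1/d+\varepsilon)$ satisfies $Q^{-\gamma}=o(Q^{1-d\tau})$. Then for $Q\ge Q_0(\tau)$ the error is at most half the main term in the lower bound, and is dominated by the main term in the upper bound. This yields $\mu_\omega(A_Q(c_0Q^{-\tau}))\asymp Q^{1-d\tau}$, with implied constants depending only on $c_0$, $d$ and the IFS.

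There is no serious obstacle. The only care needed is bookkeeping of uniformity. The constants must be independent of $\omega$, which holds because they depend only on the IFS and the fixed $\beta,\xi$. The threshold for "sufficiently large $Q$" may depend on $\tau$, which the statement allows. Note also that $\varepsilon$ depends only on $\kappa,\beta,\xi,d$, and hence only on $K$ once $\beta$ and $\xi$ are fixed.
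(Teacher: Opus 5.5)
Your proof is correct and follows the paper's argument. You apply Proposition~\ref{p:retained injectivity counting} with $\eta=c_0Q^{-\tau}$, use $T\rho_\omega\asymp Q^{1+\tau-\beta}$ and the goodness bound $\inj(h_\omega\Delta_0)^{-1}\le Q^{\xi}$, and absorb the error $Q^{\xi-\kappa(1+\tau-\beta)}$ into the main term $Q^{1-d\tau}$ for $\tau$ near $1/d$. The only difference is cosmetic: you weaken $Q^{-\kappa(1+\tau-\beta)}\le Q^{-\kappa(1+1/d-\beta)}$ to get the explicit value $\varepsilon=\gamma/d$, whereas the paper keeps the full exponent and argues by continuity at $\tau=1/d$, which would even allow the slightly larger range $(d-\kappa)(\tau-1/d)<\gamma$.
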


	\begin{proof}
		Set $\eta:=c_0Q^{-\tau}$. For
		$\omega\in\mathcal S_Q(\beta)$, we have
		\[
		T:=\frac{Q}{\eta}\asymp Q^{1+\tau}\qaq
		T\rho_\omega
		\asymp
		Q^{1+\tau-\beta}.
		\]
		Since $\beta<1+1/d$ and $\tau>1/d$, we have
		$T\rho_\omega\ge1$ for all sufficiently large $Q$.

		If $\omega$ is good, then
		\[
		\inj(h_\omega\Delta_0)^{-1}\le Q^\xi.
		\]
		Proposition~\ref{p:retained injectivity counting} therefore gives
		upper and lower bounds with main term $Q^{1-d\tau}$ and error term
		\[
		\mathrm{O}\left(
		Q^{\xi-\kappa(1+\tau-\beta)}
		\right).
		\]
		This error is $\ll Q^{1-d\tau}$ provided that
		\begin{equation}\label{eq:dynamic parameter inequality}
			\xi+(d\tau-1)
			<
			\kappa(1+\tau-\beta).
		\end{equation}
		At $\tau=1/d$, this reduces to
		\[
		\xi<
		\kappa\left(1+\frac1d-\beta\right),
		\]
		which holds by \eqref{eq:choice xi}. Hence, by choosing
		$\varepsilon>0$ sufficiently small,
		\eqref{eq:dynamic parameter inequality} holds throughout the stated
		range. The error can therefore be absorbed into the main term.
	\end{proof}

	It remains to show that the bad words form only a small exceptional
	family. This is where effective recurrence enters.

	\subsection{Effective recurrence}

		Recall from \eqref{eq:h omega} that $h_i=h_{\phi_i}$ for each
	$i\in\Lambda$. Define a discrete Borel measure on $G$ by
	\begin{equation}\label{eq:random walk measure}
		\mathfrak m
		:=
		\sum_{i\in\Lambda}
		\rho_i^\delta\,\delta_{h_i},
	\end{equation}
	where $\delta_{h_i}$ denotes the Dirac measure at $h_i$.
	Since
	\[
	\sum_{i\in\Lambda}\rho_i^\delta=1,
	\]
	$\mathfrak m$ is a probability measure on $G$. Moreover, by the
	anti-homomorphism property \eqref{eq:antihomomorphism}, its $n$-fold
	convolution power is given by
	\begin{equation}\label{eq:convolution symbolic identity}
		\mathfrak m^{*n}
		=
		\sum_{\omega\in\Lambda^n}
		\rho_\omega^\delta\,\delta_{h_\omega}.
	\end{equation}

	We need the following effective recurrence estimate.

	\begin{theorem}[{\cite[Proposition 4.1]{BenHeZhangfractalRd}}]
		\label{t:BHZ recurrence}
		There exist constants
		\[
		\kappa_1>0,
		\qquad
		\kappa_1'>0
		\]
		such that, for every $y\in X$, every $n\ge1$, and every $0<r<1$,
		\begin{equation}\label{eq:BHZ recurrence}
			\mathfrak m^{*n}*\delta_y
			\big\{z\in X:\inj(z)<r\big\}
			\ll
			\left(
			\inj(y)^{-\kappa_1'}e^{-\kappa_1n}
			+1
			\right)
			r^{\kappa_1}.
		\end{equation}
	\end{theorem}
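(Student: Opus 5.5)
This statement is imported from \cite[Proposition 4.1]{BenHeZhangfractalRd}. If I had to prove it, I would use a Margulis height function (Eskin--Margulis--Mozes / Benoist--Quint style) adapted to the random walk on $X$ driven by $\mathfrak m$. The three steps are a contraction inequality, its iteration, and a Chebyshev bound.

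\textbf{The height function.} For $z=g\Delta_0\in X$ and $1\le k\le d$, set
\[
\alpha_k(z):=\max\{\|g v\|^{-1}: v \text{ a primitive pure } k\text{-vector of }\bigwedge^k\Z^{d+1}\},
\]
which measures the smallest covolume of a rank-$k$ sublattice. Then set $f_\varepsilon:=\sum_k \epsilon_k\,\alpha_k^{\varepsilon}$ with suitable weights $\epsilon_k$. By Mahler's criterion and the reduction theory of $\SL_{d+1}(\R)$, one has $f_\varepsilon(z)\asymp \inj(z)^{-c\varepsilon}$ for some $c>0$. Since $\mathfrak m$ has finite support $\{h_i\}$, we also have $f_\varepsilon(h_iz)\le C f_\varepsilon(z)$ for all $i$ and all $z$.

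\textbf{The contraction inequality.} The goal is to find $n_0$, $c<1$ and $b>0$ such that
\[
\int f_\varepsilon(hz)\,d\mathfrak m^{*n_0}(h)\le c f_\varepsilon(z)+b \qquad\text{for all } z\in X.
\]
The linear ingredient is the following estimate, uniform over nonzero $v\in\bigwedge^k\R^{d+1}$:
\[
\sum_{|\omega|=n_0}\rho_\omega^\delta\,\|h_\omega v\|^{-\varepsilon}\le c'\|v\|^{-\varepsilon}, \qquad c'<1.
\]
To prove it, I would write $h_\omega=k_\omega^{-1}a(\rho_\omega^{-1})u(\mathbf b_\omega)$ and note that $\|a(t)u(\mathbf x)v\|$ can fail to be large, relative to $t^{\text{(positive power)}}\|v\|$, only when $\mathbf x$ lies in a small neighbourhood of a proper affine subspace determined by $v$. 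The weights $\rho_\omega^\delta$ with points $\mathbf b_\omega$ discretize $\mu$ at scale $\rho^{n_0}$. Hence the absolute decay property \eqref{eq:absolute decay}, which is where strong irreducibility and the OSC enter, shows that this bad set has $\mu$-mass at most $C\theta^{\alpha_0}$. Choosing $\varepsilon$ small relative to $\alpha_0$ and $n_0$ large then gives $c'<1$.

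Passing from single $\alpha_k$ to $f_\varepsilon$ is the standard EMM step. Using the submodularity $\alpha_k(z)^2\ll \alpha_{k-1}(z)\alpha_{k+1}(z)$, which handles the failure of a unique shortest $k$-vector, one obtains
\[
A\alpha_k^\varepsilon\le c'\alpha_k^\varepsilon+\omega_0\max_{j}\sqrt{\alpha_{k+j}^\varepsilon\alpha_{k-j}^\varepsilon},
\]
where $A$ denotes the averaging operator for $\mathfrak m^{*n_0}$. Choosing the weights $\epsilon_k$ appropriately then yields the inequality for $f_\varepsilon$.

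\textbf{Iteration and Chebyshev.} Iterating the contraction inequality, and using the bounded-distortion bound to absorb the remainder $n \bmod n_0$, gives
\[
\int f_\varepsilon\,d(\mathfrak m^{*n}*\delta_y)\ll c^{n/n_0}f_\varepsilon(y)+1 \ll e^{-\kappa_1 n}\inj(y)^{-c\varepsilon}+1.
\]
The set $\{\inj<r\}$ is contained in $\{f_\varepsilon\gg r^{-c\varepsilon}\}$, so Markov's inequality yields \eqref{eq:BHZ recurrence} with $\kappa_1'=c\varepsilon$, and with $\kappa_1$ the minimum of $c\varepsilon$ and $-\log c/n_0$.

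\textbf{Main obstacle.} I expect the hardest step to be the uniform linear contraction on every exterior power, namely turning the geometric statement ``$u(\mathbf x)v$ is not expanded only near a proper affine subspace'' into a quantitative small-mass estimate. This requires identifying the exceptional set explicitly as a neighbourhood of a subvariety and checking that it is contained in few hyperplane neighbourhoods, so that \eqref{eq:absolute decay} applies uniformly in $v$.
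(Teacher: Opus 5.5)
The paper gives no proof of this statement. It is quoted from \cite[Proposition 4.1]{BenHeZhangfractalRd} and used only as a black box in Proposition~\ref{p:conditional pruning}, so there is no route in the paper to compare against. Your outline is the standard self-contained route to such a statement and is sound: an Eskin--Margulis--Mozes height function, a uniform linear contraction on every $\bigwedge^k\R^{d+1}$ derived from absolute decay, then iteration and Chebyshev. Two of the auxiliary facts you quote are misstated, although neither breaks the argument.

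\textbf{First,} $f_\varepsilon\asymp\inj^{-c\varepsilon}$ with a single $c$ is false for $d\ge2$. Let $\lambda_1\le\cdots\le\lambda_{d+1}$ be the successive minima of $z$. Then $\inj(z)^{-1}\asymp\lambda_{d+1}/\lambda_1\asymp\alpha_1(z)\alpha_d(z)$, while $\alpha_k(z)\asymp(\lambda_1\cdots\lambda_k)^{-1}$. For $d=2$, minima $(e^{-A},1,e^{A})$ give $\max_k\alpha_k\asymp\inj^{-1/2}$, whereas minima $(e^{-2A},e^{A},e^{A})$ give $\max_k\alpha_k\asymp\inj^{-2/3}$. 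What holds is $\inj^{-\varepsilon/2}\ll f_\varepsilon\ll\inj^{-d\varepsilon}$, and that is all you actually use. For the initial term you need $f_\varepsilon(y)\ll\inj(y)^{-d\varepsilon}$, and for Chebyshev you need $\{\inj<r\}\subset\{f_\varepsilon\gg r^{-\varepsilon/2}\}$. This gives $\kappa_1'=d\varepsilon$ and $\kappa_1=\min\{\varepsilon/2,\,-\log c/n_0\}$.

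\textbf{Second,} $\alpha_k^2\ll\alpha_{k-1}\alpha_{k+1}$ is backwards as a general inequality: Minkowski gives $\alpha_k^2/(\alpha_{k-1}\alpha_{k+1})\asymp\lambda_{k+1}/\lambda_k\ge1$. Submodularity $d(L\cap M)\,d(L+M)\le d(L)\,d(M)$ yields only the conditional statement that EMM actually use. Let $\omega_0$ bound $\|h^{\pm1}\|$ on the support of $\mathfrak m^{*n_0}$. Suppose the minimizing rank-$k$ sublattice $L$ is not isolated, i.e.\ some rank-$k$ sublattice $M\ne L$ has $d(M)\le\omega_0^2d(L)$. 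Then $\alpha_k^2\le\omega_0^2\alpha_{k-j}\alpha_{k+j}$ with $j=k-\operatorname{rank}(L\cap M)\ge1$, and $j$ need not equal $1$. Your displayed inequality with $\max_j$ is the correct form.

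\textbf{The step you flag as the main obstacle is easier than you expect.}
\begin{itemize}
\item Write $v=w_0+w_1\wedge e_{d+1}$ with $w_0\in\bigwedge^k\R^d$ and $w_1\in\bigwedge^{k-1}\R^d$. Then $u(\mathbf x)v=(w_0+w_1\wedge\mathbf x)+w_1\wedge e_{d+1}$, and $\|a(t)u(\mathbf x)v\|\ge t^{k/(d+1)}\|w_0+w_1\wedge\mathbf x\|$. So the expanding component is \emph{affine} in $\mathbf x$.
\item The map $w_1\mapsto(w_1\wedge\cdot\,)$ is injective for $k\le d$. By compactness, on a fixed ball containing $K$ and uniformly in $v$, the set $\{\|w_0+w_1\wedge\mathbf x\|<s\|v\|\}$ is either empty or lies in a single slab of width $\ll s$ about an affine hyperplane. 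No subvariety analysis is needed.
\item Absolute decay and the layer-cake formula then give $\int\|a(t)u(\mathbf x)v\|^{-\varepsilon}\,d\mu(\mathbf x)\ll_\varepsilon t^{-k\varepsilon/(d+1)}\|v\|^{-\varepsilon}$ for $\varepsilon<\alpha_0$.
\item Passing from $\mu$ to $\mathfrak m^{*n_0}$ costs only a constant. The identity $a(\rho_\omega^{-1})u(\phi_\omega(\mathbf y))=u(O_\omega\mathbf y)\,a(\rho_\omega^{-1})u(\mathbf b_\omega)$ gives $\|h_\omega v\|\asymp\|a(\rho_\omega^{-1})u(\mathbf x)v\|$ for all $\mathbf x\in K_\omega$. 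Combined with $\mu=\sum_{|\omega|=n_0}\rho_\omega^\delta\mu_\omega$, taking $n_0$ large yields $c'<1$ directly, with no threshold $\theta$ to balance.
\end{itemize}
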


	The following estimate shows that the bad words have
small total mass inside every fixed $K_\omega$.

	\begin{proposition}
		\label{p:conditional pruning}
		Fix $u\in\Lambda^*$. Then, for every
		\[
		0<\gamma<\kappa_1\xi,
		\]
		and all sufficiently large $Q$,
		\begin{equation}\label{eq:conditional pruning}
			\sum_{\substack{
					\omega\in\mathcal S_Q^u(\beta)\\
					\inj(h_\omega\Delta_0)<Q^{-\xi}
			}}
			\mu(K_\omega)
			\ll_u
			Q^{-\gamma}\mu(K_u).
		\end{equation}
	\end{proposition}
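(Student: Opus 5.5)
Write each $\omega\in\mathcal S_Q^u(\beta)$ as $\omega=uv$. By \eqref{eq:antihomomorphism}, $h_\omega=h_v h_u$, so $h_\omega\Delta_0=h_v y_u$ with $y_u:=h_u\Delta_0$ a fixed point of $X$ depending only on $u$. Also $\mu(K_\omega)=\rho_u^\delta\rho_v^\delta=\mu(K_u)\rho_v^\delta$. The left side of \eqref{eq:conditional pruning} therefore equals
\[
\mu(K_u)\sum_{v\in\mathcal V}\rho_v^\delta\,\cha\{\inj(h_v y_u)<Q^{-\xi}\},
\]
where $\mathcal V:=\{v: uv\in\mathcal S_Q(\beta)\}$. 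This set is the stopping antichain $\{v:\rho_v\le Q^{-\beta}\rho_u^{-1}<\rho_{v^-}\}$, a stopping time for the random walk driven by $\mathfrak m$. It then suffices to bound $\sum_{v\in\mathcal V}\rho_v^\delta\cha\{\inj(h_vy_u)<r\}$ by $\ll_u r^{\kappa_1}$ with $r=Q^{-\xi}$, since $r^{\kappa_1}=Q^{-\kappa_1\xi}\le Q^{-\gamma}$.

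The remaining issue is that Theorem~\ref{t:BHZ recurrence} controls a fixed-time convolution $\mathfrak m^{*n}$, not a stopped walk. To handle this, split $\mathcal V$ by word length. By \eqref{eq:stopping word length}, applied with $Q^{-\beta}\rho_u^{-1}$, every $v\in\mathcal V$ has $|v|\in[n_1,n_2]$ with $n_1,n_2\asymp\log Q$ and $n_2-n_1\ll\log Q$. For fixed $n$, the words of $\mathcal V$ of length $n$ are distinct elements of $\Lambda^n$ with weights $\rho_v^\delta$. Hence
\[
\sum_{v\in\mathcal V,\,|v|=n}\rho_v^\delta\cha\{\inj(h_vy_u)<r\}\le \mathfrak m^{*n}*\delta_{y_u}\{\inj<r\}\ll(\inj(y_u)^{-\kappa_1'}e^{-\kappa_1 n}+1)r^{\kappa_1}\ll_u r^{\kappa_1}.
\]
Summing over the $O(\log Q)$ values of $n$ gives $\ll_u (\log Q)Q^{-\kappa_1\xi}$, which is $\ll Q^{-\gamma}$ for large $Q$ because $\gamma<\kappa_1\xi$ strictly. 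This absorption of the logarithm is exactly why the statement allows any $\gamma<\kappa_1\xi$ rather than $\gamma=\kappa_1\xi$.

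A cleaner alternative avoids the log loss. For each $v\in\mathcal V$, extend it by all words $w$ up to a common length $N\ge n_2$. Then $\sum_w\rho_w^\delta=1$, so $\mathfrak m^{*N}$ is the pushforward of the stopped distribution by further walk steps. This would require relating $\inj(h_wh_vy_u)$ to $\inj(h_vy_u)$. Since there are up to $n_2-n_1$ extra steps, each moving $\inj$ by a bounded factor, only a bounded power of $Q$ is lost, which is worse. So I will use the length-splitting argument above.

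The main point to verify is the bookkeeping identity $h_{uv}\Delta_0=h_vh_u\Delta_0$, which must have the correct order for the walk to start at $y_u$. The anti-homomorphism gives $h_{uv}=h_{\phi_u\circ\phi_v}=h_v h_u$, as needed. Once this holds, the dependence on $u$ enters only through $\inj(y_u)^{-\kappa_1'}$, which is a constant depending on $u$ and accounts for $\ll_u$.
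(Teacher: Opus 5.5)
Your proposal is correct and takes essentially the same route as the paper. You write $\omega=uv$, use $h_{uv}=h_vh_u$ and $\mu(K_{uv})=\rho_v^\delta\mu(K_u)$, split the family by the length $|v|\asymp\log Q$, dominate each length class by $\mathfrak m^{*n}*\delta_{h_u\Delta_0}\{\inj<Q^{-\xi}\}$, apply Theorem~\ref{t:BHZ recurrence}, and absorb the resulting $\log Q$ factor into $Q^{-\gamma}$ using $\gamma<\kappa_1\xi$.
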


	\begin{proof}
		Every $\omega\in\mathcal S_Q^u(\beta)$ can be written uniquely as
		$\omega=uv$ with $v\in\Lambda^*$. Since
		$\mu(K_\omega)=\rho_\omega^\delta$, we have
		\begin{equation}\label{eq:relative cylinder mass}
			\frac{\mu(K_{uv})}{\mu(K_u)}
			=
			\rho_v^\delta.
		\end{equation}
		Since $u$ is fixed, \eqref{eq:stopping word length} implies that there
		exist constants $c_u^-,c_u^+>0$ such that
		\[
		c_u^-\log Q
		\le
		|v|
		\le
		c_u^+\log Q
		\]
		whenever $uv\in\mathcal S_Q^u(\beta)$.

		By the anti-homomorphism property, $h_{uv}=h_vh_u$. Grouping the
		words $v$ according to their length and using \eqref{eq:relative cylinder mass} and
		\eqref{eq:convolution symbolic identity}, we obtain
		\begin{align}
			&\frac{1}{\mu(K_u)}
			\sum_{\substack{
					\omega\in\mathcal S_Q^u(\beta)\\
					\inj(h_\omega\Delta_0)<Q^{-\xi}
			}}
			\mu(K_\omega)
			\nonumber\\
			\le{}&
			\sum_{n=\lceil c_u^-\log Q\rceil}^{\lfloor c_u^+\log Q\rfloor}
			\mathfrak m^{*n}*\delta_{h_u\Delta_0}
			\left\{
			z\in X:
			\inj(z)<Q^{-\xi}
			\right\}.
			\label{eq:bad branch mass bound}
		\end{align}
		Theorem~\ref{t:BHZ recurrence} gives
		\[
		\mathfrak m^{*n}*\delta_{h_u\Delta_0}
		\left\{
		z\in X:
		\inj(z)<Q^{-\xi}
		\right\}
		\ll_u
		Q^{-\kappa_1\xi}.
		\]
		Since there are $\ll_u\log Q$ possible values of $n$,
		\[
		\frac{1}{\mu(K_u)}
		\sum_{\substack{
				\omega\in\mathcal S_Q^u(\beta)\\
				\inj(h_\omega\Delta_0)<Q^{-\xi}
		}}
		\mu(K_\omega)
		\ll_u
		(\log Q)Q^{-\kappa_1\xi}.
		\]
		For every $0<\gamma<\kappa_1\xi$, the right-hand side is
		$\ll_uQ^{-\gamma}$, proving \eqref{eq:conditional pruning}.
	\end{proof}

\section{Proof of Theorem~\ref{t:main}}
\label{s:proof}

	We now turn the counting estimates obtained above into a lower bound for
	the Hausdorff measure of $K\cap W_d(\tau)$.  The argument follows closely that of our previous work \cite{HeLiaofractal}, with the main difference being that the required counting input is now provided by the estimates established above.

	Let $\alpha_0$, $\beta$, $\xi$, and $\varepsilon>0$ be as in \eqref{eq:absolute decay},
	\eqref{eq:choice beta}, \eqref{eq:choice xi}, and
	Proposition~\ref{p:good branch counting}, respectively. For
	\[
	\frac1d<\tau<\frac1d+\varepsilon,
	\]
	set
	\begin{equation}\label{eq:s tau}
		s=s(\tau)
		:=
		\delta+\frac{d+1}{1+\tau}-d
		=
		\delta-\frac{d\tau-1}{1+\tau}.
	\end{equation}
	Then,
	\begin{equation}\label{eq:delta minus s}
		\delta-s
		=
		\frac{d\tau-1}{1+\tau}
		=
		\frac{d(\tau-1/d)}{1+\tau}.
	\end{equation}

	By decreasing $\varepsilon>0$ if necessary, we may assume that throughout
	this range
	\begin{equation}\label{eq:geometric parameter conditions}
		\frac{\beta\delta}{s}
		<
		1+\frac1d,
		\qquad
		\delta-s<\alpha_0,
		\qquad
		\beta>
		\frac{(d-\alpha_0)(1+\tau)s}{d\delta}.
	\end{equation}
	This is possible by \eqref{eq:choice beta}, since $s(\tau)\to\delta$ as
	$\tau\downarrow1/d$.

	We shall prove the following local estimate.

	\begin{proposition}
		\label{p:local Hausdorff content}
		For every ball $B$ centered on $K$ and every
		$1/d<\tau<1/d+\varepsilon$,
		\begin{equation}\label{eq:local Hausdorff content}
			\mathcal H_\infty^{s(\tau)}
			\bigl(B\cap A_Q(c_0Q^{-\tau})\bigr)
			\gg
			\mu(B)
		\end{equation}
		for all sufficiently large $Q$ depending on $B$, where $c_0>0$ is chosen sufficiently small as in Proposition \ref{p:retained injectivity counting} and the implied constant is independent
		of $B$ and $Q$.
	\end{proposition}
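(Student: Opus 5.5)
The plan is to build a probability measure $\nu$ on $B\cap A_Q(\eta)$, with $\eta=c_0Q^{-\tau}$, and check the Frostman bound $\nu(B(\bx,r))\ll r^{s}/\mu(B)$ for all $\bx$ and $r$. Proposition~\ref{p:MDP} then gives $\hc^{s}(B\cap A_Q(\eta))\gg\mu(B)$.

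\emph{Reduction to cylinders and pruning.} First fix a word $u$ with $K_u\subset B$ and $\mu(K_u)\asymp\mu(B)$; this is possible by Ahlfors regularity \eqref{eq:Ahlfors regular} and the OSC. For $Q$ large, split $\mathcal S_Q^u(\beta)$ into good and bad words. Proposition~\ref{p:conditional pruning} shows the bad words carry mass $\ll_u Q^{-\gamma}\mu(K_u)$. So the good words $\omega\in\mathcal S_Q^u(\beta)$ carry at least $\tfrac12\mu(K_u)$ once $Q$ is large; this is where the dependence of $Q$ on $B$ enters. On each good $\omega$, Proposition~\ref{p:good branch counting} gives $\mu(K_\omega\cap A_Q(\eta))=\rho_\omega^\delta\mu_\omega(A_Q(\eta))\asymp \rho_\omega^\delta Q^{1-d\tau}$.

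\emph{Definition of $\nu$.} Inside each good $K_\omega$, the set $A_Q(\eta)$ is a union of neighbourhoods $\{|\bx-\bp/q|<\eta/q\}$ of radius $\asymp Q^{-1-\tau}$. Since $\rho_\omega\asymp Q^{-\beta}\ge Q^{-1-1/d}\gg Q^{-1-\tau}$, these are much smaller than the cylinder. I would set
\[
\nu:=\frac{1}{Z}\sum_{\omega\ \mathrm{good}}\mu|_{K_\omega\cap A_Q(\eta)},
\]
so that $Z\asymp\mu(K_u)Q^{1-d\tau}$. Thus $\nu$ is $\mu$ restricted to the good part of $A_Q(\eta)$ and normalized. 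Note that $Q^{1-d\tau}=(Q^{-(1+\tau)})^{\delta-s}\cdot$const, by \eqref{eq:delta minus s}.

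\emph{Frostman estimate by scales.} Write $r$ for the radius of the test ball. I will treat three ranges.
\begin{enumerate}[(i)]
\item $r\le Q^{-1-\tau}$. Here $\nu(B(\bx,r))\le\mu(B(\bx,r))/Z\ll r^\delta/(\mu(B)Q^{1-d\tau})$. Since $r^{\delta-s}\le Q^{-(1+\tau)(\delta-s)}=Q^{1-d\tau}$, this is $\ll r^s/\mu(B)$.
\item $r\ge Q^{-\beta}$. The ball meets $\ll r^\delta Q^{\beta\delta}$ stopping cylinders. The upper count in Proposition~\ref{p:good branch counting}, applied on each good one, gives $\nu(B(\bx,r))\ll r^\delta/\mu(B)\le r^s/\mu(B)$.
\item $Q^{-1-\tau}<r<Q^{-\beta}$, the intermediate range. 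The ball sits inside $O(1)$ stopping cylinders, so I need to count rational points $\bp/q$ with $Q<q\le2Q$ whose $\eta/q$-neighbourhoods meet $B(\bx,r)\cap K$. I would bound this count using the geometry of rationals: two distinct such points are $\gg Q^{-2}$ apart. When $r\lesssim Q^{-1-1/d}$, the relevant rationals in a ball of radius $r$ lie on a single affine hyperplane, by the standard simplex lemma. Absolute decay \eqref{eq:absolute decay} then bounds the $\mu$-mass of the $Q^{-1-\tau}$-neighbourhood of that hyperplane by $(Q^{-1-\tau}/r)^{\alpha_0}\mu(B(\bx,r))$. Dividing by $Z$ and using $\delta-s<\alpha_0$ together with the last condition in \eqref{eq:geometric parameter conditions} should yield $\ll r^s/\mu(B)$. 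For $Q^{-1-1/d}\lesssim r<Q^{-\beta}$ there is no single hyperplane. Here I would compare with the cylinder scale: $\nu(B(\bx,r))\le \nu(K_\omega)\ll \rho_\omega^\delta/\mu(B)$, and require $Q^{-\beta\delta}\ll r^s$ for $r\ge Q^{-1-1/d}$. This is exactly the first condition $\beta\delta/s<1+1/d$ in \eqref{eq:geometric parameter conditions}.
\end{enumerate}

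I expect step (iii) to be the main obstacle, namely balancing the simplex-lemma/absolute-decay estimate against the trivial cylinder bound at the crossover scale. The conditions \eqref{eq:choice beta} and \eqref{eq:geometric parameter conditions} seem designed precisely for this, so I would verify the exponent arithmetic there carefully, splitting at $r=Q^{-1-1/d}$ and possibly at $r=Q^{-(1+\tau)s/\delta}$ if needed.
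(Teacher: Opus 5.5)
There is a genuine gap in the upper part of your intermediate range (iii). For $Q^{-1-1/d}\lesssim r<Q^{-\beta}$ you use the cylinder bound $\nu(B(\bx,r))\ll Q^{-\beta\delta}/\mu(B)$, which requires $Q^{-\beta\delta}\ll r^s$. At the worst point $r\asymp Q^{-1-1/d}$ this means $\beta\delta/s\ge 1+1/d$. That is the \emph{reverse} of the first condition in \eqref{eq:geometric parameter conditions}, not that condition itself.

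The reverse inequality is not available. By \eqref{eq:choice beta}, $\beta<1+1/d$ is fixed, and it must be, so that $\xi>0$ exists in \eqref{eq:choice xi}. Meanwhile $\delta/s\to1$ as $\tau\downarrow1/d$, so $\beta\delta/s<1+1/d$ for every $\tau$ close to $1/d$. Letting $\beta$ depend on $\tau$ does not help. If $\beta\ge(1+1/d)s/\delta$, then $1+\tau-\beta\le(\tau-\frac1d)\bigl(1+\frac{d+1}{(1+\tau)\delta}\bigr)$. The absorption condition \eqref{eq:dynamic parameter inequality} would then force $d<\kappa\bigl(1+\frac{d+1}{(1+\tau)\delta}\bigr)$, which fails when $\kappa$ is small.

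Consequently the cylinder bound only handles $Q^{-\beta\delta/s}\le r<Q^{-\beta}$. The window $c_dQ^{-(1+1/d)}\le r<Q^{-\beta\delta/s}$ is left untreated. Your hyperplane sub-case does not reach it either, because $D$ there is too large for the simplex lemma to place all relevant rationals on one hyperplane.

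The missing idea is a two-scale covering argument for this window:
\begin{enumerate}[(a)]
\item Cover $K\cap D$ by $\mathrm{O}(r^\delta Q^{(1+1/d)\delta})$ balls $B_i$ of radius $\asymp Q^{-(1+1/d)}$.
\item Inside each $B_i$, the simplex lemma puts all relevant $\bp/q$ on one hyperplane $L_i$. Absolute decay \eqref{eq:absolute decay} then gives $\mu(B_i\cap A_Q(\eta))\ll Q^{-\alpha_0(\tau-1/d)}\mu(B_i)$.
\item Summing over $i$ gives $\mu(D\cap A_Q(\eta))\ll r^\delta Q^{-\alpha_0(\tau-1/d)}$. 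Dividing by the normalising constant gives $\nu(D)\ll r^\delta Q^{(d-\alpha_0)(\tau-1/d)}/\mu(B)$.
\item Using $r<Q^{-\beta\delta/s}$, this is $\ll r^s/\mu(B)$ exactly when $\beta\ge(d-\alpha_0)(1+\tau)s/(d\delta)$, the third condition in \eqref{eq:geometric parameter conditions}.
\end{enumerate}
This is the only place where the lower bound on $\beta$ in \eqref{eq:choice beta} is really used. In your sketch it never is: you cite it in the small-$r$ hyperplane sub-case, where only $\delta-s<\alpha_0$ is needed. Once this case is added and the cylinder bound is restricted to $r\ge Q^{-\beta\delta/s}$, the rest of your argument coincides with the paper's proof.
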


	\begin{proof}
		We first construct a suitable subset of
		$B\cap A_Q(c_0Q^{-\tau})$. By self-similarity and the OSC, there exists
		\[
		J=K_u\subset B
		\]
		with $u\in\Lambda^*$ such that
		\begin{equation}\label{eq:cylinder inside ball}
			|J|\asymp |B|
			\qaq
			\mu(J)\asymp\mu(B).
		\end{equation}
		Note that $J$ and the word $u$ depend on $B$.
		Let
		\[
		\mathcal G_Q(J)
		:=
		\left\{
		\omega\in\mathcal S_Q^u(\beta):
		\inj(h_\omega\Delta_0)\ge Q^{-\xi}
		\right\}
		\]
		and set
		\begin{equation}\label{eq:G Q J}
			G_Q(J)
			:=
			\bigcup_{\omega\in\mathcal G_Q(J)}K_\omega.
		\end{equation}
		Finally, define
		\begin{equation}\label{eq:F Q}
			F_Q(J,\tau)
			:=
			G_Q(J)\cap A_Q(c_0Q^{-\tau}).
		\end{equation}

		By Proposition~\ref{p:conditional pruning}, for any fixed
		$0<\gamma<\kappa_1\xi$,
		\[
		\sum_{\omega\in\mathcal G_Q(J)}
		\mu(K_\omega)
		=
		(1+c_uQ^{-\gamma})\mu(J),
		\]
		for some constant $c_u>0$ depending on $u$.
		Hence,
		\begin{equation}\label{eq:good mass in J}
			\sum_{\omega\in\mathcal G_Q(J)}
			\mu(K_\omega)
			\asymp
			\mu(J)
		\end{equation}
		whenever $Q$ is sufficiently large, where the required lower bound on $Q$ depends on $u$, and hence on $B$. On the other hand,
		Proposition~\ref{p:good branch counting} gives, for every
		$\omega\in\mathcal G_Q(J)$,
		\begin{equation}\label{eq:good counting}
			\mu_\omega(A_Q(c_0Q^{-\tau}))
			\asymp
			Q^{1-d\tau}.
		\end{equation}
		Therefore,
		\begin{align}
			\mu(F_Q(J,\tau))
			&=
			\sum_{\omega\in\mathcal G_Q(J)}
			\mu(K_\omega\cap A_Q(c_0Q^{-\tau}))
			\notag\\
			&\asymp
			Q^{1-d\tau}
			\sum_{\omega\in\mathcal G_Q(J)}\mu(K_\omega)
			\asymp
			Q^{1-d\tau}\mu(B).
			\label{eq:mass pruned set}
		\end{align}

		Define the probability measure
		\begin{equation}\label{eq:lambda Q}
			\lambda_Q
			:=
			\frac{\mu|_{F_Q(J,\tau)}}{\mu(F_Q(J,\tau))}.
		\end{equation}
		We claim that
		\begin{equation}\label{eq:Frostman target}
			\lambda_Q(D)
			\ll
			\frac{r^s}{\mu(B)}
		\end{equation}
		for every ball $D=B(\mathbf{x},r)$. We will distinguish five cases.

		\medskip
		\noindent
		\textbf{Case I}: $r\ge Q^{-\beta}$. Since $F_Q(J,\tau)\subset G_Q(J)$, the upper bound in
		Proposition~\ref{p:good branch counting} gives
		\[
		\begin{aligned}
			\mu(D\cap F_Q(J,\tau))
			&\le
			\sum_{\substack{
					\omega\in\mathcal G_Q(J)\\
					K_\omega\cap D\ne\emptyset
			}}
			\mu(K_\omega\cap A_Q(c_0Q^{-\tau}))
			\\
			&\ll
			Q^{1-d\tau}
			\sum_{\substack{
					\omega\in\mathcal S_Q(\beta)\\
					K_\omega\cap D\ne\emptyset
			}}
			\mu(K_\omega).
		\end{aligned}
		\]
		All $K_\omega$ in the last sum have diameter $\asymp Q^{-\beta}\le r$,
		so their union lies in a fixed enlargement of $D$. By the Ahlfors
		regularity of $\mu$, we have
		\[
		\mu(D\cap F_Q(J,\tau))
		\ll
		Q^{1-d\tau}r^\delta.
		\]
		Together with \eqref{eq:mass pruned set}, this gives
		\begin{equation}\label{eq:estimate1}
			\lambda_Q(D)
			\ll
			\frac{r^\delta}{\mu(B)}
			\le
			\frac{r^s}{\mu(B)}.
		\end{equation}

		\medskip
		\noindent
		\textbf{Case II}:
		$Q^{-\beta\delta/s}\le r<Q^{-\beta}$.
		If $D\cap F_Q(J,\tau)=\emptyset$, there is nothing to prove.
		Otherwise choose $z\in D\cap F_Q(J,\tau)$. Then,
		\[
		D\cap F_Q(J,\tau)
		\subset
		B(z,2Q^{-\beta})\cap F_Q(J,\tau).
		\]
		Applying \eqref{eq:estimate1} in Case I with $r=2Q^{-\beta}$, we obtain
		\[
		\lambda_Q(D)
		\ll
		\frac{Q^{-\beta\delta}}{\mu(B)}
		\le
		\frac{r^s}{\mu(B)}.
		\]

		For the remaining scales we use the simplex lemma (see e.g. \cite[Lemma 4]{KrisThornVelanibadAdv}). In the form needed
		here, it states that there exists $c_d>0$ such that all rational points
		$\mathbf{p}/q$ with $1\le q\le2Q$ contained in a ball of radius
		$c_dQ^{-(1+1/d)}$ lie in a common affine hyperplane.

		\medskip
		\noindent
		\textbf{Case III}:
		$c_dQ^{-(1+1/d)}\le r<Q^{-\beta\delta/s}$.
		By the Ahlfors
		regularity of $\mu$, $K\cap D$ can be covered by
		\[
		\mathrm{O}\left(
		r^\delta Q^{(1+1/d)\delta}
		\right)
		\]
		balls $B_i$ of radius
		\[
		\rho:=c_dQ^{-(1+1/d)}/2.
		\]
		Suppose that $B_i$ meets an approximation neighborhood corresponding to
		$\mathbf p/q$, where $Q<q\le 2Q$. Since
		\[
		\left|\mathbf x-\frac{\mathbf p}{q}\right|
		\ll Q^{-(1+\tau)}
		\]
		for $\mathbf x\in A_Q(c_0Q^{-\tau})$, and $\tau>1/d$, the corresponding
		rational center $\mathbf p/q$ lies in a fixed enlargement of $B_i$ of
		radius comparable to $Q^{-(1+1/d)}$. The simplex lemma therefore implies
		that, for each $B_i$, all such rational centers lie in a common affine
		hyperplane $L_i$. It follows that
		\[
		B_i\cap A_Q(c_0Q^{-\tau})
		\subset
		B_i\cap
		 L_i^{(CQ^{-(1+\tau)})}
		\]
		for some fixed constant $C>0$.
		Applying \eqref{eq:absolute decay} with
		$\varepsilon=CQ^{-(1+\tau)}$ and
		$\rho\asymp Q^{-(1+1/d)}$, we obtain
		\[
		\begin{aligned}
			\mu\bigl(B_i\cap A_Q(c_0Q^{-\tau})\bigr)
			&\ll
			\left(
			\frac{Q^{-(1+\tau)}}{Q^{-(1+1/d)}}
			\right)^{\alpha_0}
			\mu(B_i)\\
			&\ll
			Q^{-\alpha_0(\tau-1/d)}
			Q^{-(1+1/d)\delta},
		\end{aligned}
		\]
		where in the last step we used  the Ahlfors
		regularity of $\mu$. Summing over the
		$\mathrm{O}(r^\delta Q^{(1+1/d)\delta})$ balls $B_i$ gives
		\[
		\mu(D\cap A_Q(c_0Q^{-\tau}))
		\ll
		r^\delta
		Q^{-\alpha_0(\tau-1/d)}.
		\]
		Using \eqref{eq:mass pruned set}, we have
		\begin{equation}\label{eq:case III normalized}
			\lambda_Q(D)
			\ll
			\frac{
			r^\delta
			Q^{(d-\alpha_0)(\tau-1/d)}
			}{
			\mu(B)
			}.
		\end{equation}
		Since $r<Q^{-\beta\delta/s}$ and $\delta-s>0$, we have
		\[
		r^{\delta-s}
		\le
		Q^{-\frac{\beta\delta}{s}(\delta-s)}.
		\]
		Therefore, the right-hand side of
		\eqref{eq:case III normalized} is $\mathrm{O}(r^s/\mu(B))$ provided that
	\begin{equation}\label{eq:d-alpha}
		(d-\alpha_0)\left(\tau-\frac1d\right)
		\le
		\frac{\beta\delta}{s}(\delta-s).
	\end{equation}
		Using
		\[
		\delta-s
		=
		\frac{d\tau-1}{1+\tau}
		=
		\frac{d}{1+\tau}
		\left(\tau-\frac1d\right),
		\]
		and cancelling the positive factor
		$\tau-\frac1d$, we conclude that \eqref{eq:d-alpha} is equivalent to
		\[
		\beta
		\ge
		\frac{(d-\alpha_0)(1+\tau)s}{d\delta},
		\]
		as required in \eqref{eq:geometric parameter conditions}.

		\medskip
		\noindent
		\textbf{Case IV}:
		$Q^{-(1+\tau)}\le r<c_dQ^{-(1+1/d)}$.
		The simplex lemma implies that all rational vectors relevant to
		$D\cap A_Q(c_0Q^{-\tau})$ lie in a common affine hyperplane $L$.
		Hence, by \eqref{eq:absolute decay}, we have
		\[
		\mu(D\cap A_Q(c_0Q^{-\tau}))
		\ll
		r^{\delta-\alpha_0}Q^{-\alpha_0(1+\tau)}.
		\]
			Using \eqref{eq:mass pruned set}, we obtain
		\[
		\lambda_Q(D)
		\ll
		\frac{
		r^{\delta-\alpha_0}
		Q^{d\tau-1-\alpha_0(1+\tau)}
		}{
		\mu(B)
		}.
		\]
		Since $\delta-s<\alpha_0$ (see \eqref{eq:geometric parameter conditions}), we have
		\[
		\delta-s-\alpha_0<0.
		\]
		Using $r\ge Q^{-(1+\tau)}$, it follows that
		\[
		r^{\delta-s-\alpha_0}
		\le
		Q^{(1+\tau)(\alpha_0-\delta+s)}.
		\]
		Therefore,
		\[
		\begin{aligned}
			r^{\delta-\alpha_0}
			Q^{d\tau-1-\alpha_0(1+\tau)}
			&=
			r^s
			r^{\delta-s-\alpha_0}
			Q^{d\tau-1-\alpha_0(1+\tau)}
			\\
			&\le
			r^s
			Q^{d\tau-1-(1+\tau)(\delta-s)}
			=
			r^s,
		\end{aligned}
		\]
		where in the last equality we used \eqref{eq:delta minus s}.		Hence,
		\[
		\lambda_Q(D)
		\ll
		\frac{r^s}{\mu(B)}.
		\]

			\medskip
		\noindent
		\textbf{Case V}:
		$0<r<Q^{-(1+\tau)}$.
		Since $F_Q(J,\tau)\subset K$, the Ahlfors regularity of $\mu$ and \eqref{eq:mass pruned set} give
		\[
		\lambda_Q(D)
		\ll
		\frac{r^\delta Q^{d\tau-1}}{\mu(B)}.
		\]
		Since $r< Q^{-(1+\tau)}$, by \eqref{eq:delta minus s} we have
		\[
		r^{\delta-s}
		<
		Q^{-(1+\tau)(\delta-s)}
		=
		Q^{-(d\tau-1)}.
		\]
		Therefore,
		\[
		r^{\delta-s}Q^{d\tau-1}<1,
		\]
		and hence
		\[
		\lambda_Q(D)
		\ll
		\frac{r^s}{\mu(B)}.
		\]

		The first condition in \eqref{eq:geometric parameter conditions}
		ensures that, for all sufficiently large $Q$, the above five cases
		exhaust all $r>0$. We have therefore proved
		\[
		\lambda_Q(D)
		\ll
		\frac{r^{s(\tau)}}{\mu(B)}\qquad\text{for every ball $D$.}
		\]

		Since $\lambda_Q$ is a probability measure supported on
		$F_Q(J,\tau)$, the mass distribution principle (Proposition \ref{p:MDP}) gives
		\[
		\mathcal H_\infty^{s(\tau)}(F_Q(J,\tau))
		\gg
		\mu(B).
		\]
		Finally, noting that
		\[
		F_Q(J,\tau)
		\subset
		B\cap A_Q(c_0Q^{-\tau}),
		\]
		we finish the proof of \eqref{eq:local Hausdorff content}.
	\end{proof}

	We now apply Proposition~\ref{p:local Hausdorff content} to complete the
	proof of Theorem~\ref{t:main}.

	\begin{proof}[Completion of the proof of Theorem~\ref{t:main}]
		Fix
		\[
		\frac1d<\tau<\frac1d+\varepsilon
		\]
		and set
		\[
		s=s(\tau)
		=
		\delta+\frac{d+1}{1+\tau}-d.
		\]
		Applying Proposition~\ref{p:local Hausdorff content} with $Q=2^n$,
		we obtain, for every ball $B$ centered on $K$,
		\[
		\mathcal H_\infty^s
		\bigl(B\cap A_{2^n}(c_0\,2^{-n\tau})\bigr)
		\gg
		\mu(B)
		\]
		for all sufficiently large $n$, with an implied constant independent
		of $B$ and $n$. This is stronger than condition
		\eqref{eq:condition} in Theorem~\ref{t:weaken}. Hence,
		\[
		\mathcal H^s
		\Big(
		K\cap\limsup_{n\to\infty}
		A_{2^n}(c_0\,2^{-n\tau})
		\Big)
		=
		\mathcal H^s(K)
		=
		\infty.
		\]

		If $\mathbf{x}\in A_{2^n}(c_0\,2^{-n\tau})$, then there exist
		$(\mathbf{p},q)\in\Z^d\times\N$ with
		\[
		2^n<q\le2^{n+1}
		\]
		such that
		\[
		|q\mathbf{x}-\mathbf{p}|
		<
		c_0\,2^{-n\tau}.
		\]
		Hence,
		\[
		\limsup_{n\to\infty}
		A_{2^n}(c_0\,2^{-n\tau})
		\subset
		W_d(\tau).
		\]
		Combining this inclusion with the preceding Hausdorff measure estimate, we
		obtain
		\[
		\mathcal H^s(K\cap W_d(\tau))
		=
		\infty.
		\]
		In particular,
		\begin{equation}\label{eq:lower dimension}
			\dim_H(K\cap W_d(\tau))
			\ge
			\delta+\frac{d+1}{1+\tau}-d.
		\end{equation}

		For strongly irreducible self-similar sets, the matching upper bound for $\tau$ in
		a right neighborhood of $1/d$ follows from
		\cite[Corollary 6.5]{HeLiaofractal} (see also \eqref{eq:upper}). After decreasing
		$\varepsilon$ once more if necessary, this proves
		Theorem~\ref{t:main}.
	\end{proof}
\subsection*{Acknowledgements}

	Y. He was supported by the NSFC (No. 12401108) and partially by a grant from
	the Guangdong Provincial Department of Education (2025KCXT\ D013).

	\bibliographystyle{abbrv}
	\bibliography{bibliography}

\begin{thebibliography}{10}

\bibitem{BenHeZhangfractalRd}
T.~B\'enard, W.~He, and H.~Zhang.
\newblock Khintchine dichotomy and {S}chmidt estimates for self-similar
  measures on $\mathbb{R}^d$.
\newblock {\em Preprint arXiv:2508.09076}, 2025.

\bibitem{BenHeZhangfractalJAMS}
T.~B\'enard, W.~He, and H.~Zhang.
\newblock Khintchine dichotomy for self-similar measures.
\newblock {\em J. Amer. Math. Soc.}, 39(3):587--623, 2026.

\bibitem{BesicovitchJLMSDiophantine}
A.~Besicovitch.
\newblock Sets of fractional dimensions ({IV}): On rational approximation to
  real numbers.
\newblock {\em J. London Math. Soc.}, 9(2):126--131, 1934.

\bibitem{BishopPeresbook}
C.~Bishop and Y.~Peres.
\newblock {\em Fractals in probability and analysis}, volume 162 of {\em
  Cambridge Studies in Advanced Mathematics}.
\newblock Cambridge University Press, Cambridge, 2017.

\bibitem{BugeaudDurandJEMS}
Y.~Bugeaud and A.~Durand.
\newblock Metric {D}iophantine approximation on the middle-third {C}antor set.
\newblock {\em J. Eur. Math. Soc. (JEMS)}, 18(6):1233--1272, 2016.

\bibitem{Chenfractal}
S.~Chen.
\newblock The {H}ausdorff dimension of the intersection of $\psi$-well
  approximable numbers and self-similar sets.
\newblock {\em Preprint arXiv:2510.17096}, 2025.

\bibitem{DattaJanafractals}
S.~Datta and S.~Jana.
\newblock On {F}ourier asymptotics and effective equidistribution.
\newblock {\em Preprint arXiv:2407.11961}, 2024.

\bibitem{EinFishShafractalsGAFA}
M.~Einsiedler, L.~Fishman, and U.~Shapira.
\newblock Diophantine approximations on fractals.
\newblock {\em Geom. Funct. Anal.}, 21(1):14--35, 2011.

\bibitem{HeMTPadv}
Y.~He.
\newblock A unified approach to mass transference principle and large
  intersection property.
\newblock {\em Adv. Math.}, 471:Paper No. 110267, 51, 2025.

\bibitem{HeLiaofractal}
Y.~He and L.~Liao.
\newblock Jarn\'ik-type theorem for self-similar sets.
\newblock {\em Preprint arXiv:2602.01307}, 2026.

\bibitem{Hutchinsoninitial}
J.~Hutchinson.
\newblock Fractals and self-similarity.
\newblock {\em Indiana Univ. Math. J.}, 30(5):713--747, 1981.

\bibitem{Jarnikdimension}
V.~Jarn\'ik.
\newblock {D}iophantische {A}pproximationen und {H}ausdorffsches {M}ass.
\newblock {\em Mat. Sb.}, 36:371--382, 1929.

\bibitem{Jarnikmeasure}
V.~Jarn\'ik.
\newblock {\"U}ber die simultanen diophantischen {A}pproximationen.
\newblock {\em Math. Z.}, 33:505--543, 1931.

\bibitem{KhalilLuethifractalInvent}
O.~Khalil and M.~Luethi.
\newblock Random walks, spectral gaps, and {K}hintchine's theorem on fractals.
\newblock {\em Invent. Math.}, 232(2):713--831, 2023.

\bibitem{Khintmeasure}
A.~Khintchine.
\newblock {E}inige {S}{\"a}tze {\"u}ber {K}ettenbr{\"u}che, mit {A}nwendungen
  auf die {T}heorie der diophantischen {A}pproximationen.
\newblock {\em Math. Ann.}, 92(1):115--125, 1924.

\bibitem{KleinLindWeissSelect}
D.~Kleinbock, E.~Lindenstrauss, and B.~Weiss.
\newblock On fractal measures and {D}iophantine approximation.
\newblock {\em Selecta Math. (N.S.)}, 10(4):479--523, 2004.

\bibitem{KrisThornVelanibadAdv}
S.~Kristensen, R.~Thorn, and S.~Velani.
\newblock Diophantine approximation and badly approximable sets.
\newblock {\em Adv. Math.}, 203(1):132--169, 2006.

\bibitem{LevSalpVelaniMahlerMathAnn}
J.~Levesley, C.~Salp, and S.~Velani.
\newblock On a problem of {K}. {M}ahler: {D}iophantine approximation and
  {C}antor sets.
\newblock {\em Math. Ann.}, 338:97--118, 2007.

\bibitem{SchmidtmeasureTAMS}
W.~Schmidt.
\newblock Metrical theorems on fractional parts of sequences.
\newblock {\em Trans. Amer. Math. Soc.}, 110:493--518, 1964.

\bibitem{SimmonsWeissfractalInvent}
D.~Simmons and B.~Weiss.
\newblock Random walks on homogeneous spaces and {D}iophantine approximation on
  fractals.
\newblock {\em Invent. Math.}, 216(2):337--394, 2019.

\bibitem{Yufractals}
H.~Yu.
\newblock Rational points near self-similar sets.
\newblock {\em Preprint arXiv:2101.05910}, 2021.

\end{thebibliography}

\end{document}